\theoremstyle{plain}
	\newtheorem{theorem}{Theorem}[section]
	\newtheorem*{theorem*}{Theorem}
	\newtheorem{lemma}[theorem]{Lemma}
	\newtheorem{corollary}[theorem]{Corollary}
\theoremstyle{remark}
	\newtheorem{remark}[theorem]{Remark}
\newcommand{\cK}	{\mathcal K}
\newcommand{\cP}	{\mathcal P}
\newcommand{\cU}	{\mathcal U}
\newcommand{\cZ}	{\mathcal Z}
\newcommand{\R}		{\mathbb R}
\newcommand{\aff}{\operatorname{aff}}
\newcommand{\conv}{\operatorname{conv}}
\newcommand{\sgn}{\operatorname{sgn}}
\title{A Valuation-Theoretic Approach to Translative-Equidecomposability}
\author{Katharina Kusejko and Lukas Parapatits}
\date{}
\begin{document}
  \maketitle

  \begin{abstract}
All simple translation-invariant valuations on polytopes are classified.
As a direct consequence the well-known conditions for translative-equidecomposability are recovered.
Furthermore, a simplified proof of the classification of \emph{continuous} simple translation-invariant valuations is presented.
\\[0.5cm]
Mathematics subject classification: 52B45

  \end{abstract}

  \section{Introduction}
The study of equidecomposability has always been closely connected to valuation theory.
In fact, Dehn's solution of Hilbert's Third Problem used valuations as the core ingredient.

Let $G$ be a subgroup of the group of motions that contains all translations.
Two polytopes in $\R^n$ are said to be $G$-equidecomposable if
they can be cut into finitely many pieces such that
there is a bijection between the two sets of pieces
and corresponding pieces are equal up to a transformation from $G$.

A valuation $\phi$ is a map from the set of polytopes to $\R$ such that
\[
  \phi(P \cup Q) = \phi(P) + \phi(Q) - \phi(P \cap Q)
\]
for all polytopes $P$ and $Q$ whenever $P \cup Q$ is also convex.

After Dehn's hallmark result a systematic study of valuations was initiated by Hadwiger \cite{Hadwiger1957} in the 1950's.
In recent years the interest in valuations has increased tremendously (see e.g.\
\cites{Alesker2001, AleskerBernig2012, AleskerBernigSchuster2011,
BernigFaifman, BernigFu2011, Klain2000, ParapatitsSchuster2012,
ParapatitsWannerer2013, Wannerer2014}).
Classification and characterization results have been a particular focus (see e.g.\
\cites{Abardia2012, AbardiaBernig2011, Alesker1999, AleskerFaifman2014, Bernig2009, Haberl2011, Haberl2012,
HaberlLudwig2006, HaberlParapatits2014_1, HaberlParapatits2014_2, HugSchneider2014, Ludwig2002_1,
Ludwig2002_3, Ludwig2003, Ludwig2005, Ludwig2006, Ludwig2010, LudwigReitzner2010,
Parapatits2014_1, Parapatits2014_2, Schuster2010, SchusterWannerer2012, Wannerer2011}).

One of the far reaching results of Hadwiger \cite{Hadwiger1952} (see also McMullen \cite{McMullen1983})
is a complete classification of weakly-continuous simple translation-invariant valuations.
Here, $\cU$ denotes the set of (orthonormal) frames and
$\cU^{n-k}_P$ denotes those frames that are $P$-tight and have $n-k$ entries.
See Section \ref{se: preliminaries} for precise definitions of the notation.

\begin{theorem*}[cf.\ Thereom \ref{th: classification dilation}]
  A map $\phi \colon \cP^n \to \R$ is a weakly-continuous simple translation-invariant valuation if and only if
  for all $U \in \cU$ there exists a constant $c_U \in \R$ such that $U \mapsto c_U$ is odd and
  \[
    \phi(P) = \sum_{k=1}^n \sum_{U \in \cU^{n-k}_P} c_U V_k(P_U)
  \]
  for all $P \in \cP^n$.
\end{theorem*}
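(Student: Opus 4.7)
\emph{Sufficiency.} For each fixed tight frame $U$, the assignment $P \mapsto V_k(P_U)$ is a weakly continuous translation-invariant valuation on $\cP^n$, so any linear combination of such assignments inherits these properties. The valuation identity requires matching $P$-tight frames for $P \cup Q$, $P$, $Q$, and $P \cap Q$ when $P \cup Q$ is convex; this rests on the combinatorial observation that every iterated face of $P \cup Q$ appears as an iterated face of $P$, of $Q$, or of $P \cap Q$, with matching intrinsic volumes. The subtle ingredient is simplicity: when $\dim P < n$, the oddness of $U \mapsto c_U$ forces contributions from frames that differ by the sign of the entry normal to $\aff P$ to cancel pairwise, because the corresponding iterated faces coincide.

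\emph{Necessity.} I would proceed by induction on $n$. The base case $n = 1$ is the classical statement that any simple translation-invariant valuation on intervals is a constant multiple of length, matching the formula with $c_\emptyset = \phi([0,1])$. For the inductive step, fix a unit vector $u \in S^{n-1}$. By a polynomiality argument, for any $(n-1)$-dimensional polytope $P \subset u^\perp$ the function $t \mapsto \phi(P + [0,t]u)$ is polynomial in $t$ with no constant term (by simplicity); its linear coefficient defines a weakly continuous simple translation-invariant valuation $\psi_u$ on $(n-1)$-dimensional polytopes in $u^\perp$. Applying the induction hypothesis to $\psi_u$ produces the coefficients $c_U$ for all $P$-tight frames $U$ whose first entry is $u$, and sweeping $u$ over $S^{n-1}$ covers every tight frame of positive length. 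The coefficient $c_\emptyset$ for the top-degree term is then recovered by observing that the difference between $\phi$ and the sum of contributions found so far is a weakly continuous simple translation-invariant valuation which is homogeneous of degree $n$ on the space of $n$-polytopes and hence a multiple of $V_n$.

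\emph{Main obstacle.} The hardest part is organizing the bookkeeping so that the coefficients extracted from different chains of derivations are consistent and satisfy the oddness condition. Replacing $u$ by $-u$ in the construction of $\psi_u$ produces a derivation that differs by a sign, which upon iteration through an $(n-k)$-step chain of directions forces $c_U$ to be odd in the reversal of any entry of $U$. A further point is to verify that tight frames related by reordering, or approached through different intermediate faces that terminate in the same $P_U$, receive matching coefficients; this should follow from the commutativity of face-selection operations on polytopes. Weak continuity is crucial throughout, both to justify the polynomial expansion that yields $\psi_u$ and to extend the identification from a combinatorially restricted class of polytopes to all of $\cP^n$.
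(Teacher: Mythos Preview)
Your sufficiency sketch contains a small confusion: simplicity holds trivially because, by the definition of tightness, there are no $P$-tight frames when $\dim P < n$, so the sum is empty. Oddness is needed instead for the valuation identity itself, to cancel the contributions from faces created by a splitting hyperplane; the paper's Lemma~\ref{lemmaSumVal} carries this out.

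In the necessity direction there is a genuine gap. First, the indexing is reversed. For a prism $P = P' + [0,t]u$ with $P' \subset u^\perp$, the only $P$-tight frames $U$ whose contribution $c_U V_k(P_U)$ depends on $t$ are those lying entirely in $u^\perp$ (then $P_U = (P')_U + [0,t]u$), while every tight frame containing $\pm u$ at some position cancels against its sign-flipped partner by oddness. Thus the coefficients produced by applying induction to $\psi_u$ belong to frames \emph{orthogonal} to $u$, not to frames with first entry $u$.

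More seriously, once the indexing is corrected your construction only verifies the representation on right prisms. Your assertion that the remainder is $n$-homogeneous amounts to the claim that a $k$-homogeneous ($k<n$) simple translation-invariant weakly-continuous valuation vanishing on all right prisms vanishes identically. This is true, but it is precisely the substantive step, and it does not follow from sweeping $u$ over $S^{n-1}$. The paper supplies it via a \emph{second} inductive call: after subtracting the contribution coming from $\phi'_{e_n}$ to obtain $\psi$ (which now vanishes on straight cylinders over $e_n^\perp$), one forms the skew-cylinder valuations $\psi'_u(P') := \psi(\conv(P' \cup P'|_{e_n^\perp}))$ for each $u$ transverse to $e_n^\perp$, applies the induction hypothesis to these, and finally writes an arbitrary polytope as a signed union of skew cylinders over its facets (see the proof of Theorem~\ref{Th1}). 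Your outline omits this entire mechanism.

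A minor point: your worry about reordering is unnecessary, since $c_U$ is allowed to depend on the order of the entries of $U$ and no permutation symmetry is claimed or needed.
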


Our main result generalizes this classification to simple translation-invariant valuations without any regularity assumption.

\begin{theorem*}[see Theorem \ref{Th2}]
  A map $\phi \colon \cP^n \rightarrow \R$ is a simple translation-invariant valuation if and only if
  for all $U \in \cU$ there exists an additive function $f_U \colon \R \to \R$ such that $U \mapsto f_U$ is odd and
  \[
    \phi(P) = \sum_{k=1}^n \sum_{U \in \cU^{n-k}_P} f_U(V_k(P_U))
  \]
  for all $P \in \cP^n$.
\end{theorem*}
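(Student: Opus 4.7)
The plan is to induct on the dimension $n$; the ``if'' direction is a routine check and the ``only if'' direction carries all the work. For the ``if'' direction, the preliminaries guarantee that each $P\mapsto V_k(P_U)$ is a simple translation-invariant valuation on the polytopes for which $U$ is tight, post-composition with an additive $f_U$ preserves the valuation property, and the assumed oddness of $U\mapsto f_U$ ensures that the sum over $\cU^{n-k}_P$ is independent of the arbitrary sign choices in the frames. The base case $n=1$ of the ``only if'' direction is immediate: since $\phi$ is simple it vanishes on points, so $f(a):=\phi([0,a])$ is well-defined, and translation invariance combined with the valuation identity on $[0,a]\cup[a,a+b]$ yields $f(a+b)=f(a)+f(b)$.

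For the inductive step I would first peel off the lower-dimensional contributions. Fix an affine hyperplane $H\subset\R^n$; restricting $\phi$ to polytopes lying in $H$ and exploiting translation invariance produces a simple translation-invariant valuation on $\cP^{n-1}$ (after identifying $H$ with $\R^{n-1}$), to which the inductive hypothesis applies and furnishes additive functions indexed by the tight frames of $(n-1)$-dimensional polytopes in $H$. Translation invariance of $\phi$ forces the functions obtained from parallel hyperplanes to coincide, and varying $H$ together with the oddness/consistency conditions across orientations lets the local data be assembled into a single simple translation-invariant valuation
\[
\tilde\phi(P)=\sum_{k=1}^{n-1}\sum_{U\in\cU^{n-k}_P}f_U(V_k(P_U)).
\]
By construction $\psi:=\phi-\tilde\phi$ is a simple translation-invariant valuation on $\cP^n$ that vanishes on every polytope of dimension strictly less than $n$.

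The main obstacle is to show that any such $\psi$ must be of the form $\psi(P)=f_n(V_n(P))$ for a single additive function $f_n\colon\R\to\R$; this is the Cauchy-equation-free analogue of the step where the classical proof invokes continuity to obtain a constant multiple of $V_n$. I would first establish it for axis-aligned parallelotopes: stacking two such boxes that share a facet and using the vanishing of $\psi$ on that shared $(n-1)$-dimensional facet give additivity of $a\mapsto \psi([0,a_1]\times\cdots\times[0,a_{n-1}]\times[0,a])$ in the last variable, and iterating in every coordinate together with translation invariance shows that $\psi$ restricted to boxes has the claimed form for some additive $f_n$. The extension to arbitrary polytopes proceeds by dissection: triangulate $P$, show via finite shear-and-dissect constructions (performed modulo lower-dimensional polytopes, which $\psi$ annihilates) that $\psi$ takes the same value on any two $n$-simplices of equal volume, and then use additivity of $V_n$ under triangulation together with the valuation property to conclude $\psi(P)=f_n(V_n(P))$ on all of $\cP^n$. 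The oddness of $U\mapsto f_U$ at level $k=n$ is vacuous since the empty frame has no orientation, and adding $\tilde\phi$ and $\psi$ back together closes the induction.
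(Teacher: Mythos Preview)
Your inductive step has a fatal gap: by definition, a \emph{simple} valuation already vanishes on every polytope of dimension at most $n-1$. Hence your ``restriction of $\phi$ to polytopes lying in $H$'' is identically zero, the induction hypothesis gives you nothing, and the $\tilde\phi$ you construct is the zero valuation. Consequently $\psi=\phi$, and you are asserting that \emph{every} simple translation-invariant valuation satisfies $\psi(P)=f_n(V_n(P))$ for a single additive $f_n$. That is false for $n\ge 2$: the Hadwiger functionals $H_U$ for $U\in\cU^{n-k}$ with $k<n$ are simple, translation-invariant, rational-$k$-homogeneous, and not functions of volume alone.

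The same misconception resurfaces in your claimed shear-and-dissect step: two $n$-simplices of equal volume are \emph{not} in general translative-equidecomposable (for $n\ge 2$ this is precisely the translative analogue of Hilbert's Third Problem), so no finite cut-and-paste argument modulo lower-dimensional pieces can force $\psi$ to agree on them. The paper avoids these pitfalls by first splitting $\phi$ into its rational-$k$-homogeneous parts and then, for each part, passing to $(n-1)$ dimensions not by restriction but by evaluating $\phi$ on full-dimensional cylinders $P'\mapsto\phi(P'+\overline{e_n})$; the delicate point is handling the dependence on the height, which is where Lemma~\ref{lemmaZ3} (equidecomposability modulo $\cZ_3$) and the vanishing on $\cZ_{k+1}$ enter.
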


Hadwiger's \cite{Hadwiger1957} \emph{formal main criterion} (in German: \emph{Formales Hauptkriterium}) establishes a connection
between the $G$-equidecomposability of two polytopes and simple $G$-invariant valuations.
It states that two polytopes are $G$-equidecomposable if and only if
their values agree for every simple $G$-invariant valuation.
Hence, it is possible to solve the problem of equidecomposability by establishing classification theorems of valuations.
However, so far this approach has not been successfully applied apart from special cases.
As a direct consequence of our main result we are now able to recover the following necessary and sufficient conditions for translative-equidecomposability.
They were first proved by Jessen and Thorup \cite{JessenThorup1978} and independently by Sah \cite{Sah1979}.
Before that, in dimension $n=2$ respectively $n=3$ the problem of translative-equidecomposability was already solved by
Glur and Hadwiger \cite{GlurHadwiger1951} respectively Hadwiger \cite{Hadwiger1968}.
See Section \ref{se: conditions} for precise definitions of the (basic) Hadwiger functionals $H_U$.

\begin{theorem*}[see Corollary \ref{co: condition for translative-equidecomposability}]
  Two elements $P$ and $Q$ of $\bigcup \cP^n_o$ are translative-equidecomposable if and only if
  \[
    H_U(P) = H_U(Q)
  \]
  for all $U \in \cU$.
\end{theorem*}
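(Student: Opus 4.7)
The plan is to derive the statement as a direct corollary of Hadwiger's formal main criterion combined with the main classification theorem (Theorem~\ref{Th2}). For the forward implication, I would first verify that each basic Hadwiger functional $H_U$ is itself a simple translation-invariant valuation; this is presumably recorded in Section~\ref{se: conditions}, or else follows by realizing $H_U$ as an instance of the representation in Theorem~\ref{Th2} with a specific choice of additive data. Hadwiger's formal main criterion then immediately gives $H_U(P) = H_U(Q)$ for all $U \in \cU$ whenever $P$ and $Q$ are translative-equidecomposable.

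For the reverse implication, assume $H_U(P) = H_U(Q)$ for every $U \in \cU$, and let $\phi$ be an arbitrary simple translation-invariant valuation. By Hadwiger's formal main criterion it suffices to show $\phi(P) = \phi(Q)$. Applying Theorem~\ref{Th2}, I would write
\[
  \phi(P) = \sum_{k=1}^n \sum_{U \in \cU^{n-k}_P} f_U(V_k(P_U))
\]
for a suitable odd family of additive functions $f_U$, and similarly for $\phi(Q)$. The next step is to re-express this double sum purely in terms of the quantities $H_U(P)$. I expect the basic Hadwiger functionals to be constructed so that the family $\{H_U(P)\}_{U \in \cU}$ encodes precisely the information $\{V_k(P_U)\}_{U\text{ tight}}$ needed to evaluate the sum, possibly through a triangular or M\"obius-type inversion indexed by the poset of frames. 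Once such a reparameterization is available, the hypothesis forces $\phi(P) = \phi(Q)$.

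The main obstacle is the $P$-dependence of the index set $\cU^{n-k}_P$ together with the nonlinearity of the $f_U$: a naive frame-by-frame comparison of $\phi(P)$ and $\phi(Q)$ is not possible, since different polytopes contribute different tight frames and additivity is weaker than linearity. The basic Hadwiger functionals are designed precisely to package the contributions of all relevant frames in a translation-invariant manner that is compatible with arbitrary additive postcomposition; carefully tracing this packaging through the representation of Theorem~\ref{Th2} is the technical heart of the argument.
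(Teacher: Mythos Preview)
Your overall plan matches the paper's proof exactly: the forward direction uses that each $H_U$ is a simple translation-invariant valuation (recorded just before the corollary, as an instance of Lemma~\ref{lemmaSumVal}), and the reverse direction combines Theorem~\ref{Th2} with Theorem~\ref{th: G-equidecomposability for G = translations} in precisely the way you describe.

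Where you overshoot is in the step you label the ``technical heart''. No M\"obius-type inversion over a poset of frames is needed, and your worry about the nonlinearity of the $f_U$ is misplaced. An additive function already satisfies $f_U(a-b)=f_U(a)-f_U(b)$, and since the coefficients appearing in $H_U(R)=\sum_{\eta}\sgn(\eta)\,V_k(R_{\eta U})$ are just $\pm 1$, additivity together with the oddness of $U\mapsto f_U$ gives in one line
\[
  f_U\bigl(H_U(R)\bigr)
  \;=\;\sum_{\eta\in\{\pm 1\}^{n-k}} \sgn(\eta)\,f_U\bigl(V_k(R_{\eta U})\bigr)
  \;=\;\sum_{\eta\in\{\pm 1\}^{n-k}} f_{\eta U}\bigl(V_k(R_{\eta U})\bigr).
\]
Summing over frames and using that $f_W(V_k(R_W))$ is invariant under the substitution $W\mapsto\eta W$ on the index set, the paper obtains
\[
  \phi(R)=\sum_{k=1}^n \frac{1}{2^{n-k}}\sum_{U} f_U\bigl(H_U(R)\bigr),
\]
an expression depending on $R$ only through the values $H_U(R)$; the hypothesis $H_U(P)=H_U(Q)$ then gives $\phi(P)=\phi(Q)$ immediately. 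So the ``packaging'' you anticipate is nothing more than the built-in $\{\pm 1\}^{n-k}$-symmetry shared by $H_U$ and the family $(f_U)$.
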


As we will see later on, it is also possible to recover the classification of simple translation-invariant valuations
from the conditions on translative-equidecomposability,
which makes these two theorems more or less equivalent (see Remark \ref{re: classification from conditions}).
However, in the opinion of the authors, the approach taken in the present paper yields a much simpler proof of the conditions on translative-equidecomposability.
Moreover, the techniques used in the present paper are more geometric as opposed to algebraic.

Finally, in Section \ref{se: Klain-Schneider} we will see how the techniques used in the proof of the classification above
can also be used to give a simple and simultaneous proof of the following result(s)
by Klain \cite{Klain1995} (see also \cite{KlainRota1997}) and Schneider \cite{Schneider1996}.
These two results turned out to be of great importance for later developments.
In particular, they were a crucial ingredient in the proof of Alesker's irreducibility theorem \cite{Alesker2001}.

\begin{theorem*}[see Theorem \ref{th: Klain-Schneider}]
  Let $n \geq 2$.
  A map $\phi \colon \cK^n \to \R$ is a continuous simple translation-invariant valuation if and only if
  there exist a constant $c \in \R$ and a continuous odd function $f \colon S^{n-1} \to \R$ such that
  \[
    \phi(K) = c V(K) + \int_{S^{n-1}} f \ dS_K
  \]
  for all $K \in \cK^n$.
  Furthermore, $f$ is unique up to restrictions of linear functions.
\end{theorem*}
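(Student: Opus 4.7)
The plan is to derive the Klain--Schneider theorem directly from the polytope classification (Theorem \ref{th: classification dilation}), exploiting the fact that continuity on all of $\cK^n$ is genuinely stronger than weak continuity on $\cP^n$. Sufficiency of the stated representation is an easy check: volume is a continuous simple translation-invariant valuation, and $K \mapsto \int_{S^{n-1}} f\, dS_K$ inherits the valuation property and translation invariance from the surface area measure $S_K$, which is weakly continuous in $K$; simplicity holds because $S_K$ vanishes on lower-dimensional convex bodies.

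For necessity, let $\phi \colon \cK^n \to \R$ be a continuous simple translation-invariant valuation. Its restriction to $\cP^n$ is in particular weakly continuous, so Theorem \ref{th: classification dilation} produces constants $c_U \in \R$ (odd in $U$) with
\[
  \phi(P) = \sum_{k=1}^n \sum_{U \in \cU^{n-k}_P} c_U V_k(P_U)
\]
for every $P \in \cP^n$. The decisive step is to prove $c_U = 0$ whenever the frame $U$ has length $n-k \geq 2$, so that only the volume term ($k=n$, giving $c_\emptyset V_n(P)$) and the facet term ($k=n-1$, giving $\sum_u c_u V_{n-1}(P_u)$) survive. For each offending $U$, I would construct a sequence of polytopes converging in the Hausdorff metric to a common limit that separates only the summand indexed by frames of the prescribed combinatorial type of $U$; the continuity of $\phi$ on all of $\cK^n$---as opposed to merely weak continuity on polytopes---then forces $c_U = 0$. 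A natural test configuration is obtained by approximating a smooth strictly convex body by polytopes on which one prescribed flag of length $\geq 2$ dominates the relevant sum, so that the jump is isolated to a single coefficient.

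Once the higher-order flag terms have been eliminated, the formula on polytopes reduces to $\phi(P) = c\, V_n(P) + \sum_u c_u V_{n-1}(P_u)$ with $c := c_\emptyset$. Testing continuity of $\phi$ against polytopes whose facet normals are perturbed shows that $u \mapsto c_u$ must be continuous on $S^{n-1}$; setting $f(u) := c_u$ yields $\sum_u c_u V_{n-1}(P_u) = \int_{S^{n-1}} f\, dS_P$. Since $K \mapsto c\, V(K) + \int f\, dS_K$ is continuous in $K$ and polytopes are dense in $\cK^n$, the identity extends to every convex body; oddness of $f$ descends directly from oddness of $U \mapsto c_U$.

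The main obstacle is the vanishing of the higher-order flag coefficients in the second paragraph: the explicit construction of a polytopal test family that isolates a single frame $U$ while keeping every other summand continuous is where the essential geometric work lies. For uniqueness, suppose $c\, V + \int f\, dS = c'\, V + \int f'\, dS$ on $\cK^n$; evaluating on a ball and on its rescalings gives $c = c'$, and then $\int (f - f')\, dS_K = 0$ for every $K \in \cK^n$ forces $f - f'$ to be the restriction of a linear function to $S^{n-1}$, by the classical description of the linear span of surface area measures via Minkowski's existence theorem together with the identity $\int_{S^{n-1}} u\, dS_K(u) = 0$.
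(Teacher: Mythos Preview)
Your proposal has a genuine gap at exactly the point you yourself flag: the vanishing of $c_U$ for frames of length $\geq 2$. You describe a plan to ``construct a sequence of polytopes converging in the Hausdorff metric to a common limit that separates only the summand indexed by frames of the prescribed combinatorial type of $U$'', but you do not carry this out, and it is not clear that such a construction exists. When a sequence of polytopes $P_m$ approximates a smooth strictly convex body, the number of $P_m$-tight frames of any fixed length explodes while the individual face volumes $V_k((P_m)_U)$ shrink; isolating a single coefficient $c_U$ from such a sum is not straightforward, and no limiting argument of this kind is known to the authors. Since this step is, in your own words, ``where the essential geometric work lies'', the argument as written is incomplete.

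The paper circumvents this difficulty entirely. Rather than attacking the coefficients $c_U$ one by one, it works with the homogeneous components $\phi_k$ and shows directly that $\phi_k = 0$ for $k \leq n-2$ (Lemma \ref{le: k-homogeneous, k geq n-2}). The induction step for $k \leq n-3$ reuses the skew-cylinder machinery from the proof of Theorem \ref{Th1}, now applied on $\cK^{n-1}$ where continuity is inherited. The delicate case is $k = n-2$: here the frame representation from Theorem \ref{Th1} implies that $\phi_{n-2}$ is automatically \emph{even} (frames of length $2$ pick up two sign changes under $P \mapsto -P$), and Klain's lemma (Lemma \ref{le: Klain}), which relies on the density of generalized zonoids among centrally symmetric bodies, then forces $\phi_{n-2} = 0$. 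This evenness observation plus the zonoid argument is the missing idea in your sketch; it replaces the unspecified test-family construction. The continuity of $f$ is then handled by the explicit simplex construction of Theorem \ref{th: McMullen}, which is more concrete than your appeal to ``perturbed facet normals''.
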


  \section{Preliminaries} \label{se: preliminaries}
In this section we recall the most important definitions and results used later on.
As a general reference we refer the reader to the excellent book by Schneider \cite{Schneider2014}.
We will work in $\R^n$ with $n \geq 1$.
Although it is not strictly necessary, we will now fix an orthonormal basis $e_1,\ldots,e_n$ of $\R^n$ for notational convenience. 
Moreover, let $\left\langle  .,. \right\rangle$ denote the associated scalar product.
By $\conv$ and $\aff$ we denote the \emph{convex hull} and \emph{affine hull}, respectively.
A \emph{polytope} $P$ is the convex hull of finitely many points in $\R^n$ and by $\cP^n$ we denote the set of all polytopes in $\R^n$.
By $\bigcup \cP^n$ we denote the set of all subsets $Q$ of $\R^n$ which can be written as a finite union of polytopes, i.e.\ $Q = P_1 \cup \ldots \cup P_m$ for $P_1,\ldots,P_m \in \cP^n$.
Moreover, let $\cP^n_o$ denote the polytopes of full dimension $n$ and $\bigcup \cP^n_o$ their finite unions.
By $\cP^k(M)$ we denote the set of all elements in $\cP^n$ which lie in $M$, where $M$ is a $k$-dimensional (affine) subspace of $\R^n$.
For $k=n-1$, elements in $\cP^{n-1}(M)$ and maps on $\cP^{n-1}(M)$ are denoted by dashed capital and small letters, respectively.

A map $\phi \colon \cP^n \rightarrow \mathbb{R}$ is called a \emph{valuation} on $\cP^n$ if
\begin{equation}\label{valuation}
\phi(P \cup Q) = \phi(P) + \phi(Q) - \phi(P \cap Q)
\end{equation}
whenever $P,Q, P \cup Q \in \cP^n$. Note that for every valuation on $\cP^n$ there exists a unique extension to $\bigcup \cP^n$, which we will also denote by $\phi$, such that $\phi$ satisfies equation (\ref{valuation}) for all $P,Q \in \bigcup \cP^n$ (see e.g.\ \cite[Chapter 6.2]{Schneider2014}).

If 
$$ \phi(P) = 0,\ \forall \ P \in \cP^n \text{ with } \dim(P) \leq n-1,$$
we call $\phi$ \emph{simple}. 

If $P$ and $Q$ in $\cP^n_o$ have disjoint interiors, we write $P \sqcup Q$ for their union. For a simple valuation $\phi$, the valuation property (\ref{valuation}) becomes
\begin{equation}\label{eq: additivity}
  \phi(P \sqcup Q) = \phi(P) + \phi(Q) 
\end{equation}
which we will refer to as the \emph{additivity} of $\phi$.

Moreover, $\phi$ is called \emph{translation-invariant} on $\cP^n$ if
$$ \phi(P+t) = \phi(P)$$
for all $P \in \cP^n$ and $t \in \mathbb{R}^n$.

By Hadwiger \cite[Chapter 2.2.3]{Hadwiger1957}, we know that for all simple translation-invariant valuations $\phi$, we have the following representation.

\begin{theorem} \label{homparts}
Every simple translation-invariant valuation $\phi$ can be decomposed as a sum
\begin{equation} 
\phi= \sum_{i=1}^n \phi_i,
\end{equation}
where $\phi_i$ is a rational-$i$-homogeneous valuation, i.e.\ for all rational $\lambda > 0$ we have
$$ \phi_i(\lambda P) = \lambda^i \phi(P)$$
for all $P \in \cP^n$.
\end{theorem}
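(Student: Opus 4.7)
The plan is the classical Hadwiger polynomial-expansion argument: for every fixed $P\in\cP^n$, show that the map $k\mapsto\phi(kP)$ from $\N$ to $\R$ is the restriction of a polynomial in $k$ of degree at most $n$, and then define $\phi_i(P)$ as the coefficient of $k^i$.

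First I would handle the case of a parallelotope $P=\{\sum_i t_iw_i:0\le t_i\le 1\}$ with linearly independent edges $w_1,\dots,w_n$. Then $kP$ is the union of the $k^n$ translates $P+\sum_i j_iw_i$ with $0\le j_i\le k-1$, and these have pairwise disjoint interiors. Simplicity upgrades the valuation property to additivity on disjoint-interior unions (equation~(\ref{eq: additivity})), which together with translation-invariance gives $\phi(kP)=k^n\phi(P)$ outright. This settles the polynomial property in a degenerate case and serves as a base step.

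For a general polytope $P$, I would prove the polynomial identity $\phi(kP)=\sum_{i=0}^n a_i(P)\,k^i$ by induction on $n$. The inductive step exploits a triangulation of $P$ followed by an explicit dissection of each $k\Delta$ into translates of $\Delta$ together with boundary ``shells'' supported in affine subspaces of dimension at most $n-1$. Restricted to any such subspace, $\phi$ is itself a simple translation-invariant valuation of lower dimension, so by the inductive hypothesis those contributions are polynomial of degree at most $n-1$ in $k$. Assembling the pieces yields the required identity, and the coefficients $a_i(P)$ are uniquely determined by Vandermonde inversion from the values $\phi(P),\phi(2P),\dots,\phi((n+1)P)$.

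Setting $\phi_i:=a_i$, the simplicity, translation-invariance, and valuation identity for each $\phi_i$ are inherited from the analogous identities for $\phi(k\cdot)$ at each $k\in\N$, using that coefficient extraction is linear. Rational-$i$-homogeneity follows by matching coefficients of $m^i$ in the two evaluations
\[
\phi(pm\,P)=\sum_i\phi_i(P)\,(pm)^i\qquad\text{and}\qquad \phi\bigl(qm\cdot(p/q)P\bigr)=\sum_i\phi_i\bigl((p/q)P\bigr)(qm)^i,
\]
which give $\phi_i((p/q)P)=(p/q)^i\phi_i(P)$. Finally, $\phi_0\equiv 0$ is obtained by a short separate argument: $\phi_0$ is itself a simple translation-invariant valuation invariant under all positive rational dilations; applied to a midpoint subdivision of any $n$-simplex into smaller similar copies (some direct, some centrally reflected), the resulting linear relation, combined with additivity and scale-invariance, forces $\phi_0$ to vanish on every simplex, hence on every polytope by additivity on a triangulation.

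The main obstacle is the inductive polynomial identity: the parallelotope case reduces to a straightforward counting via translation-invariance and additivity, but for a general polytope one must carefully control the lower-dimensional boundary terms in a dissection of $kP$, and this is precisely where the simplicity hypothesis in dimension $n-1$ enters crucially via the inductive hypothesis.
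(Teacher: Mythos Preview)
The paper does not give its own proof of Theorem~\ref{homparts}; it quotes the result from Hadwiger \cite[Chapter 2.2.3]{Hadwiger1957}. Your overall strategy---show that $k\mapsto\phi(kP)$ is the restriction to $\N$ of a polynomial of degree at most $n$ and then extract coefficients---is indeed Hadwiger's, and your passage from the polynomial identity to rational-$i$-homogeneity via Vandermonde and comparison of coefficients is correct.

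The inductive step for the polynomial expansion, however, has a genuine gap. You propose to dissect $k\Delta$ into translates of $\Delta$ together with ``boundary shells supported in affine subspaces of dimension at most $n-1$'' and then apply the induction hypothesis to the \emph{restriction} of $\phi$ to those subspaces. But $\phi$ is simple, so its literal restriction to any proper affine subspace is identically zero; there is nothing for the induction hypothesis to act on. More fundamentally, $k\Delta$ does not decompose into translates of $\Delta$ modulo a lower-dimensional set: already for $n=2$ the subdivision of $2\Delta$ contains a centrally reflected copy of $\Delta$, and for $n\ge 3$ the region left in $2\Delta$ after removing the $n+1$ corner translates of $\Delta$ is not built from copies of $\Delta$ at all (for $n=3$ it is an octahedron). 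Any correct argument must manufacture a \emph{derived} $(n-1)$-dimensional valuation---e.g.\ via prisms as in Lemma~\ref{lemmak-1homogeneous}, or via the cylinder relation of Lemma~\ref{modZk}---rather than a restriction. The same defect undermines your $\phi_0\equiv 0$ step: a midpoint subdivision of an $n$-simplex into smaller \emph{similar} copies, some direct and some reflected, exists only for $n\le 2$.
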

Note that a similar decomposition is also possible for translation-invariant valuations $\phi$, which are not necessarily simple. This representation is known as the \emph{McMullen decomposition} (see e.g.\ \cite[Chapter 6.3]{Schneider2014}). 

Given a polytope $P \in \cP^n_o$, we say that $P$ can be \emph{decomposed} into polytopes $P_1,\ldots,P_m \in \cP^n_o$, if 
$$ P =  P_1 \sqcup\ldots \sqcup P_m.$$
We call $P$ and $Q$ in $\cP^n_o$ \emph{translation-equivalent}, denoted by $P \simeq Q$, if there exists a $t \in \mathbb{R}^n$ such that $P+t=Q$. Moreover, $P$ and $Q$ are called \emph{translative-equidecomposable}, denoted by $P \sim Q$, if
$$ P = P_1 \sqcup \ldots \sqcup P_m,\ Q = Q_1 \sqcup \ldots \sqcup Q_m \text{ and } P_i \simeq Q_i \text{ for } i=1,\ldots,m. $$

In general, for any Euclidean group $G$ in $\mathbb{R}^n$ which contains the translation group, we say that two polytopes $P$ and $Q$ are \emph{G-equidecomposable} if
\[
  P = P_1 \sqcup \ldots \sqcup P_m,
\]
\[
  Q = Q_1 \sqcup \ldots \sqcup Q_m
\]
and for all $i=1,\ldots,m$ we have $P_i = g_i(Q_i)$ for some $g_i \in G$. Similarly, a map $\phi$ on $\cP^n$ is called $G$-invariant, if $\phi = \phi \circ g$ for all $g \in G$. By Hadwiger \cite[Chapter 2.2.5]{Hadwiger1957}, we know how to decide whether two given polytopes $P$ and $Q$ in $\cP^n$ are $G$-equidecomposable.

\begin{theorem}\label{FormalCriterion}
Two elements $P$ and $Q$ of $\bigcup \cP^n_o$ are $G$-equidecomposable if and only if
$$\phi(P)=\phi(Q),$$ 
for all simple $G$-invariant valuations.
\end{theorem}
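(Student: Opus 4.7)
The ``only if'' direction is an immediate computation. If $P = P_1 \sqcup \cdots \sqcup P_m$ and $Q = Q_1 \sqcup \cdots \sqcup Q_m$ with $P_i = g_i(Q_i)$ for $g_i \in G$, and $\phi$ is a simple $G$-invariant valuation (extended additively to $\bigcup \cP^n$), then iterated use of \eqref{eq: additivity} and $G$-invariance gives
\[
  \phi(P) = \sum_{i=1}^m \phi(P_i) = \sum_{i=1}^m \phi(g_i Q_i) = \sum_{i=1}^m \phi(Q_i) = \phi(Q).
\]

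For the converse I would use a universal-valuation construction. Let $V$ be the real vector space with basis $\{[P] : P \in \bigcup \cP^n_o\}$ and let $W \subseteq V$ be the subspace spanned by the additivity relations $[A \sqcup B] - [A] - [B]$ (for $A, B, A \sqcup B \in \bigcup \cP^n_o$ with $A, B$ of disjoint interiors) together with the $G$-invariance relations $[gA] - [A]$ (for $g \in G$ and $A \in \bigcup \cP^n_o$). A function on $\bigcup \cP^n_o$ is a simple $G$-invariant valuation precisely when its linear extension to $V$ annihilates $W$, i.e.\ factors through $V/W$. Since linear functionals separate points in a real vector space, the condition that $\phi(P)=\phi(Q)$ for every such $\phi$ is equivalent to $[P] \equiv [Q] \pmod W$, and the task becomes to prove the implication $[P] \equiv [Q] \pmod W \Rightarrow P \sim_G Q$.

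To establish this implication, consider the abelian monoid $\mathcal{M}$ of $\sim_G$-equivalence classes of elements of $\bigcup \cP^n_o$, equipped with ``shift apart and take union'' addition (well-defined because $G$ contains all translations), and let $\Gamma$ denote its Grothendieck group. The tautological map $\psi \colon \bigcup \cP^n_o \to \Gamma$ is additive, simple, and $G$-invariant. Rationalizing the real coefficients in the $W$-representation of $[P] - [Q]$ (the underlying linear system has rational coefficients) and clearing denominators yields an integer identity of the form $N([P] - [Q]) = \sum_i n_i r_i$ in the free abelian group on $\bigcup \cP^n_o$, and hence $N\psi(P) = N\psi(Q)$ in $\Gamma$ for some positive integer $N$. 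Provided $\mathcal{M}$ embeds into $\Gamma$ and $\Gamma$ is torsion-free, this forces $\psi(P) = \psi(Q)$, which by definition means $P \sim_G Q$.

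\textbf{Main obstacle.} The heart of the argument is therefore the two geometric properties just invoked: cancellativity of $\mathcal{M}$ (if $P \sqcup R \sim_G Q \sqcup R$ then $P \sim_G Q$) and its torsion-freeness ($N$ disjoint copies of $P$ being $G$-equidecomposable with $N$ copies of $Q$ implies $P \sim_G Q$). Both are established by taking a common refinement of the two given decompositions, systematically pairing up the pieces that lie in the ``surplus'' part on each side via the congruences supplied by the equidecomposition, and peeling them off in a controlled iterative fashion. This combinatorial bookkeeping is the essence of Hadwiger's original proof; once it is in place, the reduction above is purely formal.
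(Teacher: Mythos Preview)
The paper does not give its own proof of this statement; it is quoted in the preliminaries as a known result of Hadwiger \cite[Chapter 2.2.5]{Hadwiger1957}. So there is nothing in the paper to compare your argument against.

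That said, your outline is essentially Hadwiger's argument, and it is correct. The ``only if'' direction is exactly as you wrote. For the converse, your reduction is sound: linear functionals on $V/W$ are precisely the $\sqcup$-additive $G$-invariant real functions on $\bigcup\cP^n_o$, and these coincide (after extending by $0$ on lower-dimensional polytopes) with simple $G$-invariant valuations in the sense of the paper, so the hypothesis indeed forces $[P]\equiv[Q]\pmod W$. Your rationalisation step is valid---a linear system with rational coefficients that has a real solution has a rational one---though a slightly more direct route is to note that if $\Gamma$ is torsion-free then $\Gamma\hookrightarrow\Gamma\otimes_{\mathbb Z}\R$, so composing the tautological $\Gamma$-valued valuation $\psi$ with any $\R$-linear functional on $\Gamma\otimes\R$ already yields a real simple $G$-invariant valuation; these separate $\psi(P)$ from $\psi(Q)$ whenever they differ.

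You have correctly isolated the non-formal core: cancellativity of the monoid $\mathcal M$ (so that $\mathcal M\hookrightarrow\Gamma$) and torsion-freeness (so that $N\psi(P)=N\psi(Q)$ forces $\psi(P)=\psi(Q)$). Both are genuine geometric facts whose proofs require the refinement/pairing arguments you allude to; once they are in hand, the rest is indeed formal. Since the paper treats the whole theorem as background, your proposal is an adequate sketch of the classical proof.
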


Note that this result has only a formal character as long as we do not know how to describe all simple $G$-invariant valuations explicitly. In the following, we are mainly interested in translative-equidecomposable polytopes, i.e.\ we take $G$ to be the group of all translations.

For $P, Q \in \cP^n$ we define the \emph{Minkowski sum} $P+Q$ of $P$ and $Q$ by
$$ P +Q := \{ p + q \ | \ p \in P, \ q \in Q \}. $$
We call $P \in \cP^n_o$ a \emph{$k$-cylinder} if $P = P_1 + \ldots + P_k$, where $P_i \subseteq V_i$ for subspaces $V_i$ with $V_1 \oplus \ldots \oplus V_k = \mathbb{R}^n$ and $\dim(V_i)\neq0$. The set of all $k$-cylinders is denoted by $Z_k$. For a $P \in Z_k$ of dimension $n$ we therefore have $\sum_{i=1}^k \dim(P_i) = n$. Note that $Z_1 = \cP^n_o$.

By $\cZ_k$ we denote the set of all elements $P \in \bigcup \cP^n_o$ which are translative-equidecompos\-able to some $Q = Q_1 \sqcup \ldots \sqcup Q_m$ with $Q_i \in Z_k$, $1 \leq i \leq m$, together with the empty set. Since every $k$-cylinder is a $(k-1)$-cylinder as well, we have
$$ \cZ_n \subseteq \cZ_{n-1} \subseteq \ldots \subseteq \cZ_1.$$
Clearly, $\cZ_1 = \bigcup \cP^n_o$. Note that for $A,B \in \cZ_k$ also $A \sqcup B \in \cZ_k$. 

We call $P$ and $Q$ in $\cP^n_o$ \emph{translative-equidecomposable modulo $\cZ_k$}, denoted by 
$$ P \sim Q \mod  \cZ_k,$$
if there exist $A,B \in \cZ_k$ such that $P \sqcup A \sim Q \sqcup B$.

Recall the following result by Hadwiger \cite[Chapter 1.3.7]{Hadwiger1957}.

\begin{lemma} \label{modZk}
For $P \in \cZ_k$ and an integer $m>0$ we have
\begin{equation} \label{equivalence}
 m P \sim m^k \bullet P \mod \cZ_{k+1}.
\end{equation}
Note that $m P$ is the dilation of $P$ by $m$, whereas $m^k \bullet P$ denotes the union of $m^k$ disjoint translated copies of $P$.
\end{lemma}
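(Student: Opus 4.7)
By reduction, it suffices to prove the lemma when $P$ is itself a single $k$-cylinder $P = P_1 + \cdots + P_k$ with $V_1 \oplus \cdots \oplus V_k = \R^n$. Indeed, if $P \sim Q_1 \sqcup \cdots \sqcup Q_l$ with each $Q_j \in Z_k$, then dilation by $m$ and the operator $m^k \bullet$ both commute with $\sim$ and with disjoint unions, while $\cZ_{k+1}$ is closed under $\sqcup$; so it is enough to verify the claim for each $Q_j$ separately. One then proceeds by induction on $k$.

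For the inductive step $k \geq 2$, I would set $W := V_2 \oplus \cdots \oplus V_k$ and $Q := P_2 + \cdots + P_k$, so that $P = P_1 + Q$ with $P_1 \in V_1$ and $Q \in W$ itself a $(k-1)$-cylinder in $W$. Since $\dim V_1, \dim W < n$, the induction hypothesis applies in each subspace: in $V_1$ it gives $mP_1 \sim m \bullet P_1 \mod \cZ_2(V_1)$, and in $W$ it gives $mQ \sim m^{k-1} \bullet Q \mod \cZ_k(W)$. The key structural facts are that $\sim$ is preserved under Minkowski sum with a set in a complementary subspace, and that Minkowski-summing an $a$-cylinder in a subspace with a $b$-cylinder in a complementary subspace yields an $(a+b)$-cylinder. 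Consequently both $\cZ_2(V_1)$ and $\cZ_k(W)$ error classes promote to $\cZ_{k+1}(\R^n)$, and chaining the equivalences gives
\[
mP = mP_1 + mQ \sim (m \bullet P_1) + mQ = m \bullet (P_1 + mQ) \sim m \bullet (P_1 + m^{k-1} \bullet Q) = m^k \bullet P \mod \cZ_{k+1},
\]
the last equality holding because $P_1$ and the translates used to form $m^{k-1} \bullet Q$ lie in complementary subspaces, so $P_1 + m^{k-1} \bullet Q = m^{k-1} \bullet P$.

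The main obstacle is the base case $k = 1$: for $P \in \cP^n_o$, we need $mP \sim m \bullet P \mod \cZ_2$. I would treat this by a nested induction on the dimension $n$, with the trivial one-dimensional case (where $\cZ_2$ is empty and $mP$ literally tiles by $m$ translates of $P$) as the starting point. After triangulating $P$, it suffices to handle an $n$-simplex $S$. Placing one vertex of $S$ at the origin and taking $v$ as the edge vector to a neighbouring vertex, convexity ensures the translates $S, S+v, \ldots, S+(m-1)v$ fit inside $mS$ with pairwise disjoint interiors, and the residual consists of a shifted smaller copy $(m-1)S + t$ together with several ``reversed'' simplices along the staircase which can be paired with placed upward translates to form parallelepipeds---each a 2-cylinder. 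Induction on $m$ closes the base case.

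The main technical difficulty will lie in the high-dimensional base case: in $\R^2$ the pairing of upward and downward triangles into parallelograms is clean and immediate, but in dimensions $\geq 3$ the reversed staircase pieces come in several combinatorial types, and showing that the residual really decomposes as a union of 2-cylinders (and not only of finer cylinder classes) will likely require either iterating the hyperplane slicing one dimension at a time, or a careful combinatorial matching argument to pair each reversed piece with an appropriate upward translate.
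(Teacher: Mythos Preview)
The paper does not prove this lemma; it is quoted from Hadwiger's book (Chapter~1.3.7) without argument, so there is no in-paper proof to compare against. Your overall architecture---reduce to a single $k$-cylinder, then induct on $k$ by splitting $P = P_1 + Q$ with $P_1 \subseteq V_1$ and $Q$ a $(k-1)$-cylinder in the complementary subspace $W$---is sound and matches the classical approach. The chaining in the inductive step is correct: Minkowski-adding an $a$-cylinder in $V$ to a $b$-cylinder in a complementary $W$ yields an $(a{+}b)$-cylinder, and translative equidecomposability is preserved under Minkowski addition with a fixed polytope in a complementary subspace, so both the $\cZ_2(V_1)$ and the $\cZ_k(W)$ error terms promote to $\cZ_{k+1}(\R^n)$ as you claim.

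The genuine gap is the base case $k=1$ for $n \geq 3$, where your description of the residual is not merely incomplete but incorrect. Take $n=3$, $m=2$, $S = \conv\{0,e_1,e_2,e_3\}$, and place $S$ and $S+e_1$ inside $2S$ as you propose. The residual is $(S+e_2) \sqcup (S+e_3) \sqcup O$, where $O$ is the central octahedron with vertices at the six edge-midpoints of $2S$---so two further \emph{upward} translates of $S$ and an octahedron, not a single copy of $(m-1)S$ plus reversed simplices. More fundamentally, the pairing idea cannot work: in dimension $n \geq 3$ an $n$-simplex and its point-reflection together fill only a $2/n!$ fraction of the parallelepiped on the same edge vectors, not the whole thing. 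A correct route---which your ``hyperplane slicing'' remark does point toward---is to slice $mS$ by hyperplanes parallel to a facet at integer heights; the $j$-th slab is then a translate of the Minkowski sum $(m{-}1{-}j)S' + S$, where $S'$ is the base $(n{-}1)$-simplex, and a regular mixed subdivision of any such sum $A + S$ with $A \subseteq e_n^\perp$ decomposes it into exactly one translate of $S$ together with cells $F + G$ having $\dim F, \dim G \geq 1$ and complementary linear spans, hence $2$-cylinders.
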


For the dilation of simplices $P$ and $Q$ by some $\lambda > 0$ and $\mu >0$, respectively,
we need the following basic geometric lemma due to Jessen and Thorup (see \cite{JessenThorup1978}) and Sah (see \cite{Sah1979}).
In dimension $n=3$ it was already known to Hadwiger \cite{Hadwiger1968}.
For this, define the simplex
  $$[a_1,\ldots,a_k] := \conv \{0,a_1,a_1+a_2,\ldots,a_1+\ldots+a_k\}, $$
where $a_1,\ldots,a_k$ are linearly independent vectors.

\begin{lemma} \label{LemmaModZ3}
  For linearly independent vectors $a_1,\ldots,a_n$ and for $\lambda, \mu >0$ we have
  \begin{equation*}
    \lambda[a_1,\ldots,a_j] +  \mu[a_{j+1},\ldots,a_n] \sim \mu[a_1,\ldots,a_j] + \lambda[a_{j+1},\ldots,a_n] \mod \cZ_3,
  \end{equation*}
  for every $j \in \{1,\ldots,n\}$.
\end{lemma}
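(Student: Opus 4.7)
My plan is to extract from Lemma~\ref{modZk} a symmetric double-dilation identity at integer scales, deduce the rational case by rescaling, and then extend to arbitrary positive reals by a staircase decomposition and a dimension induction. Write $V_1 := \operatorname{span}(a_1,\ldots,a_j)$, $V_2 := \operatorname{span}(a_{j+1},\ldots,a_n)$, $S_1 := [a_1,\ldots,a_j]$, $S_2 := [a_{j+1},\ldots,a_n]$. Since $\lambda S_1$ is full-dimensional in $V_1$ it lies in $\cZ_1(V_1)$, so Lemma~\ref{modZk} applied inside $V_1$ gives $m(\lambda S_1) \sim m\bullet(\lambda S_1) \mod \cZ_2(V_1)$ for every positive integer $m$. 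Taking the Minkowski sum with the transverse factor $m'\mu S_2 \subset V_2$ lifts this to a $\cZ_3(\R^n)$-relation---a $2$-cylinder in $V_1$ plus a polytope in $V_2$ is a $3$-cylinder in $\R^n$---yielding $m\lambda S_1 + m'\mu S_2 \sim m\bullet(\lambda S_1 + m'\mu S_2) \mod \cZ_3$. Performing the same reduction on the $V_2$-factor (replacing $m'\mu S_2$ by $m'\bullet(\mu S_2)$) and iterating gives the key identity
$$m\lambda\,S_1 + m'\mu\,S_2 \;\sim\; mm' \bullet (\lambda S_1 + \mu S_2) \mod \cZ_3,$$
valid for all positive integers $m, m'$.

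The right-hand side is symmetric in $(m, m')$, so interchanging them yields $m\lambda S_1 + m'\mu S_2 \sim m'\lambda S_1 + m\mu S_2 \mod \cZ_3$. Setting $\lambda = \mu = 1$ with $m = p$, $m' = q$ positive integers specialises this to $pS_1 + qS_2 \sim qS_1 + pS_2 \mod \cZ_3$; writing arbitrary positive rationals over a common denominator $N$ and rescaling by $1/N$ (which preserves both $\sim$ and $\cZ_3$) then proves the lemma whenever $\lambda, \mu$ are rational.

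The main obstacle I expect will be the extension from rational to arbitrary real $\lambda, \mu > 0$, since translative-equidecomposability carries no topology to allow a density argument. To close this gap I would invoke the classical staircase decomposition of the orthoscheme
$$(\lambda+\mu)[a_1,\ldots,a_n] = \bigsqcup_{k=0}^{n}\bigl(\lambda[a_1,\ldots,a_k] + \mu[a_{k+1},\ldots,a_n]\bigr),$$
compare it with its $\lambda \leftrightarrow \mu$ counterpart (both decompose the same large simplex), and cancel the $k=0$ and $k=n$ pieces to obtain an exact aggregate equidecomposability of the remaining Minkowski-sum cylinders. Separating the $j$-th summand modulo $\cZ_3$ would then be carried out by induction on $n$, the base case $n=2$ being classical (parallelograms of equal area, since $\cZ_3 = \{\emptyset\}$) and the inductive step combining the aggregate identity with the integer-scale symmetry obtained above.
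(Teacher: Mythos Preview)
The paper does not prove Lemma~\ref{LemmaModZ3}; it is quoted as a ``basic geometric lemma due to Jessen and Thorup \cite{JessenThorup1978} and Sah \cite{Sah1979}'' (Hadwiger for $n=3$), so there is no in-paper proof to compare with. I can therefore only assess your argument on its own merits.

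Your first phase is sound. The lifting of $m(\lambda S_1)\sim m\bullet(\lambda S_1)\bmod\cZ_2(V_1)$ to a $\cZ_3(\R^n)$-relation by adding a transverse Minkowski summand is correct (a $2$-cylinder in $V_1$ plus a full-dimensional polytope in $V_2$ is a $3$-cylinder in $\R^n$), and iterating gives the key identity $m\lambda S_1+m'\mu S_2\sim mm'\bullet(\lambda S_1+\mu S_2)\bmod\cZ_3$. Swapping $m,m'$ then yields the lemma whenever $\lambda/\mu\in\Q$. This part is clean.

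The gap is exactly where you flag it: the passage from rational to arbitrary real $\lambda,\mu$. The staircase decomposition of $(\lambda+\mu)[a_1,\dots,a_n]$ is correct and does give (after cancellation, which itself needs a Zylev-type argument) the exact aggregate relation
\[
\bigsqcup_{k=1}^{n-1}\bigl(\lambda[a_1,\dots,a_k]+\mu[a_{k+1},\dots,a_n]\bigr)\ \sim\ \bigsqcup_{k=1}^{n-1}\bigl(\mu[a_1,\dots,a_k]+\lambda[a_{k+1},\dots,a_n]\bigr).
\]
But isolating the $j$-th pair $A_j\sim B_j\bmod\cZ_3$ from this requires already knowing $A_k\sim B_k\bmod\cZ_3$ for the other $k$, which is the statement itself. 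Your sketch says the inductive hypothesis in dimension $n-1$ closes this, yet it does not: an $(n-1)$-dimensional instance lives in a proper subspace, and adding the missing transverse segment converts every term into a $3$-cylinder, so the lifted relation becomes trivial modulo $\cZ_3$ and carries no information. I could not find any non-circular way to combine the aggregate identity with your integer-scale symmetry that reaches irrational ratios, and your proposal does not supply one. This is precisely the step where the Jessen--Thorup/Sah proofs do real work; it is not a bookkeeping matter, and as written your inductive step is a genuine gap.
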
 

In the following we diverge slightly from the notation of the book by Schneider \cite{Schneider2014}.
Let $\cU^k$ denote the set of all (orthonormal) $k$-frames in $\mathbb{R}^n$, i.e.\ the set of all ordered $k$-tuples of orthonormal vectors in $\mathbb{R}^n$ for $k=0,\ldots,n-1$. Note that $\cU^0$ contains the empty tuple $()$ only.
For $P \in \cP^n$ the face $P_U$ for $U=(u_1,\ldots,u_k) \in \cU^k$ is defined inductively by $P_{()}=P$ and $P_{(u_1,\ldots,u_k)} = F(P_{(u_1,\ldots,u_{k-1})},u_k)$, where $F(P_{(u_1,\ldots,u_{k-1})},u_k)$ denotes the face of $P_{(u_1,\ldots,u_{k-1})}$ in direction $u_k$.
A frame $U=(u_1,\ldots,u_k) \in \cU^k$ is called \emph{$P$-tight}, if $\dim (P_{(u_1,\ldots,u_r)})=n-r$ for $r=0,\ldots,k$. Clearly, for every $P$ there are only finitely many $P$-tight frames, which are denoted by $\cU^k_P$. In particular, for $\dim (P)<n$ there are no $P$-tight frames. For $U \in \cU^k_P$ we have $V_{n-k}(P_U) > 0$.

Furthermore, we define
$$ \cU := \bigcup_{k=0}^{n-1} \cU^k $$
and 
$$ \cU_P := \bigcup_{k=0}^{n-1} \cU^k_P.$$

Consider $U=(u_1,\ldots,u_k) \in \cU^k$ and define the multiplication of $U$ with $\eta = (\epsilon_1, \ldots, \epsilon_k) \in \{ \pm 1 \}^k$ by
$\eta U:= (\epsilon_1 u_1,\ldots,\epsilon_k u_k)$.
A map $U \mapsto f_U$ is called \emph{odd} if $f_{\eta U} = \sgn(\eta)f_U$ for all such $\eta$ and $U$, where $\sgn(\eta):=\epsilon_1 \ldots \epsilon_k$.
We call $U \mapsto f_U$, $U \in \cU$, odd if every restriction to a set $\cU^k$, $k = 0, \ldots, n-1$, is odd.

  \section{Some technical results}
This section provides the necessary technical tools used later on.

\begin{lemma} \label{lemmaZ3}
  Let $k \in \{1,\ldots,n-1\}$, $P \in \bigcup \cP^k_o(V)$ and $Q \in \bigcup \cP^{n-k}_o(W)$ such that $V \oplus W = \mathbb{R}^n$.
  For all $\lambda > 0$ we have
  \begin{equation} \label{modZ3}
    \lambda P + Q \sim  P + \lambda Q \mod \cZ_3. 
  \end{equation}
\end{lemma}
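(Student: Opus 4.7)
The plan is to reduce this to Lemma~\ref{LemmaModZ3} by triangulating both $P$ and $Q$ into simplices of the standard form $[a_1,\ldots,a_k]$.

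First, I would reduce to the case where $P$ and $Q$ are individual polytopes. Writing $P = P_1 \sqcup \cdots \sqcup P_r$ with $P_i \in \cP^k_o(V)$ and $Q = Q_1 \sqcup \cdots \sqcup Q_s$ with $Q_j \in \cP^{n-k}_o(W)$, the fact that $V \oplus W = \R^n$ makes the map $(v,w) \mapsto v + w$ a linear isomorphism $V \times W \to \R^n$. Hence the interiors of the Minkowski sums $P_i + Q_j$ are pairwise disjoint, so $\lambda P + Q = \bigsqcup_{i,j} (\lambda P_i + Q_j)$ and $P + \lambda Q = \bigsqcup_{i,j} (P_i + \lambda Q_j)$. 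Since $\cZ_3$ is closed under disjoint unions, it suffices to prove the equivalence for each single pair $(P_i, Q_j)$.

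Next, I would triangulate each individual polytope: write $P_i = \bigsqcup_\alpha T_\alpha$ with $T_\alpha$ a $k$-simplex in $V$, and $Q_j = \bigsqcup_\beta S_\beta$ with $S_\beta$ an $(n-k)$-simplex in $W$. By ordering vertices and taking successive edge vectors, each simplex can be written (up to translation) in the normal form of Lemma~\ref{LemmaModZ3}, i.e., $T_\alpha = t_\alpha + [a^{(\alpha)}_1, \ldots, a^{(\alpha)}_k]$ with $a^{(\alpha)}_1, \ldots, a^{(\alpha)}_k$ a basis of $V$, and similarly $S_\beta = s_\beta + [b^{(\beta)}_1, \ldots, b^{(\beta)}_{n-k}]$. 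The same disjointness argument as above yields $\lambda P_i + Q_j = \bigsqcup_{\alpha,\beta}(\lambda T_\alpha + S_\beta)$ and analogously for $P_i + \lambda Q_j$, reducing the problem to a single pair of simplices.

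For such a pair, the combined list $a^{(\alpha)}_1,\ldots,a^{(\alpha)}_k, b^{(\beta)}_1,\ldots,b^{(\beta)}_{n-k}$ is a basis of $\R^n$ thanks to $V \oplus W = \R^n$. Lemma~\ref{LemmaModZ3}, applied with $\mu = 1$ and its index $j$ equal to our $k$, then gives
\[
\lambda[a^{(\alpha)}_1,\ldots,a^{(\alpha)}_k] + [b^{(\beta)}_1,\ldots,b^{(\beta)}_{n-k}] \sim [a^{(\alpha)}_1,\ldots,a^{(\alpha)}_k] + \lambda[b^{(\beta)}_1,\ldots,b^{(\beta)}_{n-k}] \mod \cZ_3.
\]
The two sides of the desired relation differ from these by the translations $\lambda t_\alpha + s_\beta$ and $t_\alpha + \lambda s_\beta$ respectively, but translative-equidecomposability (and $\cZ_3$) are invariant under translations of either side, so this completes the proof. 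The only technical subtlety is maintaining disjointness of interiors through the Minkowski sum decompositions, which is why the $V \oplus W = \R^n$ hypothesis is essential throughout.
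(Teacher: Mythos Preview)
Your proof is correct and follows essentially the same approach as the paper: reduce to simplices via triangulation and apply Lemma~\ref{LemmaModZ3} with $\mu=1$. The paper's own proof is just a two-sentence sketch of exactly this argument, so you have simply filled in the details (disjointness via $V\oplus W=\R^n$, the normal form for simplices, and translation-invariance) that the paper leaves implicit.
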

\begin{proof}
  By choosing $\mu=1$ in Lemma \ref{LemmaModZ3}, we see that the statement is true for all simplices.
  Since all elements in $\bigcup \cP^k_o(V)$ respectively $\bigcup \cP^{n-k}_o(W)$
  can be decomposed into $k$-simplices respectively $(n-k)$-simplices, the statement follows.
\end{proof}

In what follows, we consider expressions of the form
$$\sum_{U \in \cU^{n-k}_P} f_U(V_k(P_U)),$$
where the $f_U$'s are additive functions.
Note that this sum is well-defined since there are only finitely many $P$-tight frames.

\begin{lemma} \label{lemmaSumVal}
  Let $1 \leq k \leq n$ and for every $U \in \cU^{n-k}$ let $f_U \colon \R \rightarrow \R$ be an additive function
  such that $U \mapsto f_U$ is odd.
  Define $\phi \colon \cP^n \to \R$ by
  \begin{equation} \label{equationVanish}
    \phi(P) = \sum_{U \in \cU^{n-k}_P} f_U(V_k(P_U))
  \end{equation}
  for all $P \in \cP^n$.
  Then $\phi$ defines a rational-$k$-homogeneous simple translation-invariant valuation on $\cP^n$.
  Moreover, $\phi$ vanishes on $(k+1)$-cylinders.
\end{lemma}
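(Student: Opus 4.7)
Translation-invariance follows from $(P+t)_U = P_U + t$ and the translation-invariance of $V_k$. Simplicity holds because the $r=0$ case of the tightness condition forces $\dim P = n$, so $\cU^{n-k}_P = \emptyset$ whenever $\dim P < n$. Rational-$k$-homogeneity follows from $(\lambda P)_U = \lambda P_U$, $V_k(\lambda F) = \lambda^k V_k(F)$ on any $k$-dimensional face $F$, and the $\Q$-linearity of additive maps.

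The valuation property is the main work; I would prove it by induction on $n-k$ and reduce it (via the standard dissection argument for simple functionals) to additivity $\phi(R) = \phi(P) + \phi(Q)$ for $R = P \sqcup Q \in \cP^n_o$. The base case $n=k$ gives $\phi(P) = f_{()}(V_n(P))$, a valuation by additivity of $V_n$ and of $f_{()}$. For the inductive step, I group the defining sum by the first frame entry $v$:
\[
  \phi(P') = \sum_{v \in S^{n-1}} g_v(P'), \qquad g_v(P') := \sum_{U'} f_{(v,U')}\bigl(V_k((P'_v)_{U'})\bigr),
\]
where $U'$ ranges over $(n-k-1)$-frames tight for $P'_v$ inside $v^\perp$. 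The expression $g_v(P')$ depends on $P'$ only through $P'_v$; setting $g_v(P') =: \tilde\phi_v(P'_v)$ yields a functional $\tilde\phi_v$ on polytopes in the hyperplane $v^\perp$ built from the odd additive family $U' \mapsto f_{(v,U')}$, which by the inductive hypothesis is a simple translation-invariant valuation in dimension $n-1$.

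Let $u_0$ be the outward normal of $P$ at the interface $F = P \cap Q \subset H$. For $v \neq \pm u_0$, a case analysis on the facet structure of $P$, $Q$, $R$ in direction $v$ --- exploiting that convexity of $R$ excludes $F(P,v)$ and $F(Q,v)$ being $(n-1)$-dimensional in distinct parallel hyperplanes (otherwise the corresponding hyperplane would separate $P$ and $Q$, forcing $v = \pm u_0$) --- reduces $g_v(R) = g_v(P) + g_v(Q)$ to additivity of $\tilde\phi_v$ on a dissection in a common hyperplane, with lower-dimensional leftovers absorbed by simplicity of $\tilde\phi_v$. For $v = u_0$, $F(R,u_0) = F(P,u_0)$ and $F(Q,u_0) = F$, producing a mismatch $g_{u_0}(R) - g_{u_0}(P) - g_{u_0}(Q) = -\tilde\phi_{u_0}(F)$; by symmetry, $v = -u_0$ contributes $-\tilde\phi_{-u_0}(F)$. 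The oddness $f_{(-u_0, U')} = -f_{(u_0, U')}$ forces $\tilde\phi_{-u_0} = -\tilde\phi_{u_0}$ on $\cP^{n-1}(H)$, so these interface mismatches cancel, completing additivity.

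Vanishing on $(k+1)$-cylinders I would derive by downward induction on $j \in \{1,\ldots,n-k\}$ using Lemma \ref{modZk}. Since $\R^n$ admits no $(n+1)$-cylinder, $\cZ_{n+1}$ is empty; at $j = n-k$, for any $n$-cylinder $P$, $mP \sim m^n \bullet P$ (strictly, by Lemma \ref{modZk}), and combining with rational-$k$-homogeneity and additivity on translated copies gives $m^k \phi(P) = m^n \phi(P)$, forcing $\phi(P) = 0$ for any $m \geq 2$. For the inductive step, assuming $\phi \equiv 0$ on $\cZ_{k+j+1}$ and taking a $(k+j)$-cylinder $P$, Lemma \ref{modZk} gives $mP \sim m^{k+j} \bullet P \mod \cZ_{k+j+1}$; since $\phi$ is by now a simple translation-invariant valuation, Theorem \ref{FormalCriterion} yields $m^k\phi(P) = m^{k+j}\phi(P)$, again forcing $\phi(P) = 0$, with extension to all of $\cZ_{k+j}$ automatic. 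The main obstacle is the case analysis in the additivity step, which requires using the convexity of $R$ to rule out degenerate $(n-1)$-facet configurations in the interior and then carefully applying simplicity of $\tilde\phi_v$ to kill the lower-dimensional leftovers.
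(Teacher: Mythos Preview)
Your argument is correct, but both halves take a different route from the paper. For the valuation property, the paper argues directly at the level of frames: given a hyperplane cut $P = P^+ \sqcup P^-$, each $U \in \cU^{n-k}_P$ either has $P_U$ entirely on one side (matching a single term on the right) or has $P_U$ split by $H$ (matching two terms via additivity of $V_k$ and $f_U$); the leftover frames $U \in \cU^{n-k}_{P^+}$ with $P^+_U \subseteq H$ pair off with frames $\eta U \in \cU^{n-k}_{P^-}$ of opposite sign. Your induction on $n-k$, grouping by the first entry $v$ and pushing the work to the $(n-1)$-dimensional valuation $\tilde\phi_v$, reaches the same conclusion but trades the paper's bare-hands frame bookkeeping for a cleaner recursive structure; the price is the facet case analysis you flag, which the paper avoids entirely. (One cosmetic slip: with $u_0$ the \emph{outward} normal of $P$ at $F$ you have $F(P,u_0)=F$ and $F(R,u_0)=F(Q,u_0)$, not the other way around; the mismatch is still $-\tilde\phi_{u_0}(F)$, so nothing breaks.) For vanishing on $(k+1)$-cylinders, the paper gives a direct pairing argument---in any $(k+1)$-cylinder some summand is collapsed to a point by each $P$-tight $U$, and flipping the sign of the collapsing entry produces a cancelling partner---whereas you reproduce the proof of Lemma~\ref{LemmaVanish} via Lemma~\ref{modZk}. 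The paper's Remark after Lemma~\ref{lemmaSumVal} explicitly notes this alternative but prefers the direct argument for the geometric insight; your appeal to Theorem~\ref{FormalCriterion} is harmless overkill, since only the trivial direction (translative-equidecomposable implies equal values) is used.
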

\begin{proof}
  The translation-invariance and rational-$k$-homogeneity of $\phi$ are obvious.
  Since the sum on the right hand side is empty for lower dimensional polytopes, $\phi$ is simple.

  First, we will show that $\phi$ is a valuation.
  Let $P \in \cP^n_o$ and let $H$ be a hyperplane that decomposes $P$ into
  \[
    P^+ := P \cap H^+ \in \cP^n_o \text{ and } P^- := P \cap H^- \in \cP^n_o ,
  \]
  where $H^+$ and $H^-$ are the two closed halfspaces bounded by $H$.
  By \eqref{eq: additivity} we only have to show
  \begin{equation} \label{eq: show additivity}
    \phi(P) = \phi(P^+) + \phi(P^-) .
  \end{equation}

  To this end, let $U \in \cU^{n-k}_P$.
  We distinguish two different cases.
  If $P_U$ lies completely in one of the two closed halfspaces bounded by $H$,
  then we have either that $U$ is $P^+$-tight and $P^+_U = P_U$
  or that $U$ is $P^-$-tight and $P^-_U = P_U$ but not both.
  Note that this is even true if $P_U \subseteq H$.
  Otherwise $U$ is both $P^+$-tight and $P^-$-tight
  and $P_U = P^+_U \sqcup P^-_U$ in $\aff(P_U)$.
  By the additivity of $V_k$ and $f_U$ we get
  \[
    f_U(P_U) = f_U( V_k( P^+_U ) ) + f_U( V_k( P^-_U ) ) .
  \]
  In both cases we have the same total contributions on both sides of \eqref{eq: show additivity}.

  We still need to show that all remaining terms
  on the right hand side of \eqref{eq: show additivity} cancel out.
  Let $U \in \cU^{n-k}_{P^+}$ be such that its contribution has not yet been accounted for.
  In this case $P^+_U \subseteq H$ and there exists a unique $\eta \in \{ \pm 1 \}^{n-k}$
  such that $\eta U \in \cU^{n-k}_{P^-}$ and $P^-_{\eta U} = P^+_U$.
  This $\eta$ will have exactly one entry with $-1$.
  In particular, $\sgn(\eta) = -1$.
  We conclude
  \[
    f_{\eta U}( V_k( P^-_{\eta U} ) ) = - f_U( V_k( P^+_U ) ) .
  \]
  Hence, all remaining terms on the right hand side of \eqref{eq: show additivity} sum up to $0$.

  In the last part we will show that $\phi$ vanishes on $(k+1)$-cylinders.
  For $k=n$ there is nothing to show.
  Let $k<n$ and take $P \in Z_{k+1}$, i.e.\ $P$ can be written as
  \[
    P = P_1 + \ldots + P_{k+1},
    \text{ where } P_i \subseteq V_i \text{ for } 1 \leq i \leq k+1
    \text{ and } V_1 \oplus \ldots \oplus V_{k+1}=\mathbb{R}^n.
  \]
  Consider $P_U$ for $U \in \cU^{n-k}$.
  Every component $u_j$ of $U=(u_1,\ldots,u_{n-k})$ reduces the dimension
  of exactly one of the summands $P_1,\ldots,P_{k+1}$ by one.
  Since $P \in \cP^n_o$ and $n-(n-k)=k < k+1$, at least one of these summands reduces to a point.
  Without loss of generality, let $P_1$ reduce to a point, as otherwise, we can change the order of the summation.
  Moreover, we may assume that $u_j$ reduces $P_1$ to a point.
  This means that the first component of $P_{U'}$ for $U'=(u_1,\ldots,u_{j-1})$ reduces to a line segment
  and both frames $U=(u_1,\ldots,u_{j-1},u_j,u_{j+1},\ldots,u_{n-k})$
  and $\tilde{U}=(u_1,\ldots,u_{j-1},-u_j,u_{j+1},\ldots,u_{n-k})$ are $P$-tight.
  Since $f_U$ is an odd function, we have
  \[
    f_U(V_k(P_U)) = - f_{\tilde{U}}(V_k(P_{\tilde{U}}))
  \]
  and those two values cancel out on the right hand side of \eqref{equationVanish}.
  Because $U$ was arbitrary, we get $\phi(P)=0$.
\end{proof}

\begin{remark} Since (\ref{equationVanish}) defines a simple translation-invariant valuation, which vanishes for $(k+1)$-cylinders, it actually vanishes on $\cZ_{k+1}$, since all elements in $\cZ_{k+1}$ are translative-equidecomposable to some $Q = P_1 \sqcup \ldots \sqcup P_m$ for $P_1,\ldots,P_m \in \cZ_{k+1}$.
Moreover, the fact that (\ref{equationVanish}) vanishes on $(k+1)$-cylinders can also be seen by the following result. Nevertheless, the direct proof above gives more insight into how such $\phi$ are evaluated.
\end{remark}

The next two lemmas are simple consequences of Lemma \ref{modZk}.

\begin{lemma} \label{LemmaVanish}
Any rational-$k$-homogeneous simple translation-invariant valuation $\phi$ vanishes on $\cZ_{k+1}$. 
\end{lemma}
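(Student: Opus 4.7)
The plan is to prove the lemma by downward induction on $j \geq k+1$, showing that $\phi$ vanishes on $\cZ_j$ for every such $j$. The base case is $j > n$, where $\cZ_j$ consists only of the empty set, so there is nothing to show. The target endpoint is $j = k+1$, which is the statement of the lemma.

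For the reduction at each level, I would first observe that it suffices to show $\phi(P) = 0$ for each $P \in Z_j$. Indeed, every element of $\cZ_j$ is translative-equidecomposable to a finite disjoint union $Q_1 \sqcup \ldots \sqcup Q_m$ with each $Q_i \in Z_j$, so Theorem \ref{FormalCriterion} (applied with $G$ the translation group) together with the additivity \eqref{eq: additivity} of $\phi$ reduces the vanishing on $\cZ_j$ to vanishing on individual $j$-cylinders.

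For the inductive step, suppose $\phi$ vanishes on $\cZ_{j+1}$ and let $P \in Z_j \subseteq \cZ_j$. By Lemma \ref{modZk}, for every integer $m > 0$ we have $mP \sim m^j \bullet P \mod \cZ_{j+1}$, that is $mP \sqcup A \sim (m^j \bullet P) \sqcup B$ for some $A, B \in \cZ_{j+1}$. Applying Theorem \ref{FormalCriterion} and the additivity of $\phi$ yields
\[
  \phi(mP) + \phi(A) = m^j \phi(P) + \phi(B).
\]
The inductive hypothesis kills the terms $\phi(A)$ and $\phi(B)$, leaving $\phi(mP) = m^j \phi(P)$. On the other hand, rational-$k$-homogeneity of $\phi$ gives $\phi(mP) = m^k \phi(P)$. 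Hence $(m^j - m^k) \phi(P) = 0$, and because $j \geq k+1 > k$, taking $m = 2$ forces $\phi(P) = 0$. This completes the inductive step and the proof.

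I do not anticipate a significant obstacle: the only slightly subtle point is choosing the correct direction of induction. One has to induct from larger $j$ down to $j = k+1$, so that the error term in Lemma \ref{modZk} always lives in a set $\cZ_{j+1}$ on which $\phi$ has already been shown to vanish; the comparison between the homogeneity exponent $k$ and the multiplicity exponent $j$ from Lemma \ref{modZk} then does the rest.
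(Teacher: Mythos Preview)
Your proof is correct and follows essentially the same approach as the paper: downward induction from $j=n$ to $j=k+1$, using Lemma~\ref{modZk} at each step together with rational-$k$-homogeneity to force $m^k\phi(P)=m^j\phi(P)$ and hence $\phi(P)=0$. Two minor remarks: the reduction from $\cZ_j$ to $Z_j$ is unnecessary since Lemma~\ref{modZk} is stated for $P\in\cZ_j$ directly, and invoking Theorem~\ref{FormalCriterion} is heavier than needed---the easy direction (that $P\sim Q$ implies $\phi(P)=\phi(Q)$) follows immediately from additivity and translation-invariance.
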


\begin{proof}
We will show by induction that $\phi(\cZ_l) = \{0\}$ for $l$ from $n$ to $k+1$.
Assume that this holds for $l+1$, which is trivially true for $l=n$.
Take $P \in \cZ_l$ and use (\ref{equivalence}) to obtain
$$ m P \sim m^l \bullet P \mod \cZ_{l+1}.$$
By assumption, this gives $m^k \phi(P) = m^l \phi(P)$. Since $l \geq k+1$, $\phi(P)=0$.
\end{proof}

\begin{lemma}[{cf.\ \cite[Corollary 6.3.2 (b)]{Schneider2014}}] \label{LemmaZero}
Any rational-$k$-homogeneous simple transla\-tion-invariant valuation $\phi$ which vanishes on $\cZ_{k}$ is identically zero. 
\end{lemma}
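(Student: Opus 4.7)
The strategy is downward induction on $l$ showing that $\phi$ vanishes on $\cZ_l$ for $l = k, k-1, \ldots, 1$. The base case $l=k$ is the hypothesis of the lemma, and the terminal case $l=1$ combined with the simplicity of $\phi$ gives $\phi \equiv 0$ on all of $\cP^n$, since $\cZ_1 = \bigcup \cP^n_o$ contains every full-dimensional polytope while simplicity disposes of lower-dimensional ones.

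For the inductive step I would fix $l$ with $2 \leq l \leq k$, assume $\phi$ vanishes on $\cZ_l$, and take an arbitrary $P \in \cZ_{l-1}$. Lemma \ref{modZk} applied at level $l-1$ yields, for every positive integer $m$,
\[
  m P \sim m^{l-1} \bullet P \mod \cZ_l,
\]
so by definition there exist $A, B \in \cZ_l$ with $m P \sqcup A \sim m^{l-1} \bullet P \sqcup B$. Hadwiger's formal criterion (Theorem \ref{FormalCriterion}) applied to the simple translation-invariant valuation $\phi$, together with the additivity (\ref{eq: additivity}) and the inductive hypothesis $\phi(A) = \phi(B) = 0$, then gives $\phi(m P) = \phi(m^{l-1} \bullet P)$. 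Rational-$k$-homogeneity converts the left-hand side into $m^k \phi(P)$, while additivity and translation-invariance convert the right-hand side into $m^{l-1} \phi(P)$. Hence $(m^k - m^{l-1}) \phi(P) = 0$, and since $l-1 < k$ the choice $m = 2$ forces $\phi(P) = 0$, closing the induction.

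I do not foresee a substantive obstacle. The only delicate point is that Lemma \ref{modZk} yields equidecomposability only \emph{modulo} $\cZ_l$, so one cannot apply the formal criterion directly to the pair $mP$ and $m^{l-1}\bullet P$; the inductive hypothesis, however, is tailored precisely to absorb the auxiliary $\cZ_l$-pieces $A$ and $B$ on both sides via additivity, and the argument then goes through without friction.
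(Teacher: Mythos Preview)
Your proof is correct and follows essentially the same downward induction as the paper, which simply remarks that one can ``proceed in the same way as in Lemma~\ref{LemmaVanish}'' to push the vanishing from $\cZ_k$ down to $\cZ_1$. One small remark: you invoke Theorem~\ref{FormalCriterion}, but only its trivial direction (that a simple translation-invariant valuation takes equal values on translative-equidecomposable sets) is used, which follows immediately from additivity and translation-invariance without appeal to the full criterion.
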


\begin{proof}
By Lemma \ref{LemmaVanish}, we already know that
$$ \phi(P)= 0 \text{ for all } P \in \cZ_j,\ k<j. $$
By assumption $\phi$ also vanishes on $\cZ_k$.
We can now proceed in the same way as in Lemma \ref{LemmaVanish} to see that $\phi$ vanishes on $\cZ_j$ for $j<k$ as well. In particular, $\phi$ vanishes on $\cZ_1 = \bigcup \cP^n_o$.
\end{proof}

For $u \in \mathbb{R}^n \backslash \{0\}$, let $\overline{u}$ denote the line segment from the origin to $u$.
\begin{lemma} \label{lemmak-1homogeneous}
Let $\phi \colon \cP^n \rightarrow \mathbb{R}$ be a rational-$k$-homogeneous simple translation-invariant valuation and define 
$$\phi'(P'):=\phi(P'+ \overline{u}),$$
for all $P' \in \cP^{n-1}(H)$, where $H$ is some $(n-1)$-dimensional subspace and $u\in \mathbb R^n \setminus H$. Then $\phi'$ is a rational-$(k-1)$-homogeneous simple translation-invariant valuation.
\end{lemma}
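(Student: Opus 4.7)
The plan is to verify the four defining properties of $\phi'$: it is a valuation, simple, translation-invariant on $H$, and rational-$(k-1)$-homogeneous. The first three are essentially formal, while the homogeneity shift is the one that requires a geometric argument, but a very short one.

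For the valuation property, I would use that for $P', Q', P' \cup Q' \in \cP^{n-1}(H)$ the identities
\[
  (P' \cup Q') + \overline{u} = (P' + \overline{u}) \cup (Q' + \overline{u}),
  \qquad
  (P' \cap Q') + \overline{u} = (P' + \overline{u}) \cap (Q' + \overline{u})
\]
hold; the first is trivial, and the second follows because if $p' + t_1 u = q' + t_2 u$ with $p', q' \in H$ and $t_1,t_2 \in [0,1]$, then $p' - q' = (t_2 - t_1) u$ forces $t_1 = t_2$ and $p' = q'$, since $u \notin H$. The union $(P'+\overline{u}) \cup (Q'+\overline{u})$ is convex because $(P' \cup Q') + \overline{u}$ is a Minkowski sum of two convex sets. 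Applying the valuation property of $\phi$ to $P' + \overline{u}$ and $Q' + \overline{u}$ then gives the valuation property of $\phi'$. Simplicity is immediate: if $\dim P' \leq n-2$, then $\dim(P' + \overline{u}) \leq n-1$, so $\phi(P' + \overline{u}) = 0$. Translation-invariance on $H$ (translations of $\cP^{n-1}(H)$ are by vectors in $H$) follows directly from translation-invariance of $\phi$ on $\R^n$.

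For rational-$(k-1)$-homogeneity the crucial observation is the prism decomposition
\[
  q(P' + \overline{u}) = qP' + q\overline{u} = \bigsqcup_{i=0}^{q-1} \bigl(qP' + \overline{u} + i u\bigr)
\]
valid for every positive integer $q$. Applying the rational-$k$-homogeneity of $\phi$ to the left-hand side and additivity together with translation-invariance to the right-hand side yields
\[
  q^k \phi'(P') = q^k \phi(P' + \overline{u}) = \phi\bigl(q(P' + \overline{u})\bigr) = q\, \phi(qP' + \overline{u}) = q\, \phi'(qP'),
\]
so $\phi'(qP') = q^{k-1} \phi'(P')$ for all positive integers $q$. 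Extending to arbitrary positive rational $\lambda = p/q$ is routine: applying the integer identity with $P'$ replaced by $\frac{1}{q} P'$ gives $\phi'(\tfrac{1}{q} P') = q^{-(k-1)} \phi'(P')$, and then applying it again with $q$ replaced by $p$ gives $\phi'(\tfrac{p}{q} P') = p^{k-1} \phi'(\tfrac{1}{q} P') = (p/q)^{k-1} \phi'(P')$.

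The proof uses no deep input beyond the defining properties of $\phi$; in particular it does not require Lemma~\ref{lemmaZ3} or any of the structural results on $\cZ_k$. The only point to be careful about is distinguishing the role of the base $P'$ (which gets dilated) from the height $\overline{u}$ (which does not) inside the prism $P' + \overline{u}$; this is exactly why the homogeneity degree drops from $k$ to $k-1$, with the "missing" factor of $q$ absorbed into the stacking of $q$ translates along the $u$-direction.
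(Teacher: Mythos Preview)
Your proof is correct and follows essentially the same route as the paper's: the paper declares the valuation, simplicity and translation-invariance properties ``clear'' and then proves rational-$(k-1)$-homogeneity via the identical stacking argument $m(P'+\overline{u}) \sim m \bullet (mP'+\overline{u})$, deducing $m^k\phi'(P') = m\,\phi'(mP')$. You have simply filled in the details the paper omits (the Minkowski-sum identities for the valuation property and the explicit passage from integers to rationals), but the substance is the same.
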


\begin{proof}
It is clear that $\phi'$ is a simple translation-invariant valuation. We only need to prove that it is rational-$(k-1)$-homogeneous.
For this, it is enough to show $\phi'(mP')=m^{k-1}\phi'(P')$ for every integer $m >0$. We have
$$ \phi'(m P') = \phi(m P' + \overline{u}), $$
where $m P' + \overline{u}$ is a $2$-cylinder with base $m P'$ and height $|u|$. We can stack $m$ copies of this cylinder to get $m P' + m \overline{u}$, i.e.\ we have
$$ m \bullet (mP'+ \overline{u}) \sim mP' +m\overline{u} = m(P'+\overline{u}). $$
Since $\phi$ is a simple translation-invariant valuation, we conclude
$$ m \ \phi(m P' + \overline{u}) = \phi(m (P' + \overline{u})). $$
Since $\phi$ is also rational-$k$-homogeneous, we get
$$\phi'(m P') =  \frac{1}{m} \phi(m (P' + \overline{u})) = m^{k-1} \phi(P'+\overline{u}) = m^{k-1} \phi'(P'). $$
\end{proof}

  \section{Proof of the Main Results} \label{se: classification}
The main goal in this chapter is to prove that any simple translation-invariant valuation $\phi \colon \cP^n \rightarrow \R$ can be written as
$$\phi(P) = \sum_{k=1}^n \sum_{U \in \cU^{n-k}_P} f_U(V_k(P_U)),\ \forall P \in \cP^n, $$
where for all $U \in \cU$ the map $f_U \colon \R \to \R$ is an additive function such that $U \mapsto f_U$ is odd.

The idea is to first show a similar result for rational-$k$-homogeneous simple translation-invariant valuations in Theorem \ref{Th1}
and then to generalize it to all simple translation-invariant valuations in Theorem \ref{Th2}.

The following proof is heavily influenced by Hadwiger's \cite{Hadwiger1952} proof of his classification of
weakly-continuous simple translation-invariant valuations.
However, there are some key differences.
Hadwiger does not prove the explicit representation directly,
but instead proves a recursive representation first.
As a result he has to also prove an extension lemma alongside the classification using simultaneous induction.
Working directly with the frame representation removes these difficulties.
Furthermore, reducing the problem to rational-$k$-homogeneous valuations and using Lemma \ref{lemmaZ3}
make it possible to do a more careful analysis leading to the stronger result.

\begin{theorem} \label{Th1} Let $1 \leq k \leq n$ and $\phi \colon \cP^n \rightarrow \R$ be a rational-$k$-homogeneous simple translation-invariant valuation. For all $U \in \cU^{n-k}$ there exists an additive function $f_U \colon \R \to \R$ such that $U \mapsto f_U$ is odd and $\phi$ can be represented by
  \begin{equation} \label{rep1}
    \phi(P) = \sum_{U \in \cU^{n-k}_P} f_U(V_k(P_U))
  \end{equation}
for all $P \in \cP^n$.
\end{theorem}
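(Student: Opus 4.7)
The strategy is induction on $n$. The base case $n = 1$ forces $k = 1$, with $\cU^0 = \{()\}$; I set $f_{()}(t) := \phi([0,t])$ for $t \ge 0$ and extend oddly. Additivity follows from $[0,s+t] = [0,s] \sqcup (s + [0,t])$ together with simplicity and translation invariance.

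For the inductive step with $n \ge 2$, the key tool is the prism reduction (Lemma \ref{lemmak-1homogeneous}): for every hyperplane $H$ and $u \in \R^n \setminus H$, the map $\phi_{H,u}(P') := \phi(P' + \overline{u})$ on $\cP^{n-1}(H)$ is a rational-$(k-1)$-homogeneous simple translation-invariant valuation. When $k \ge 2$, the induction hypothesis furnishes a frame representation
\[
  \phi_{H,u}(P') = \sum_{U'} f^{H,u}_{U'}\left(V_{k-1}(P'_{U'})\right),
\]
summed over $(n-k)$-frames $U'$ in $H$ that are $P'$-tight. From these data—varying $(H, u)$—I would read off candidate additive functions $f_U$ for every $U \in \cU^{n-k}$ in $\R^n$, with the pairing of $U$ and $-U$ ensuring oddness. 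In the top-dimensional case $k = n$ the candidate reduces to a single function $f_{()}$, which I define by $f_{()}(t) := \phi([0,t] \times [0,1]^{n-1})$, additive by slicing along $e_1$. The degenerate case $k = 1$ calls for a separate construction since the prism reduction then produces $\phi_{H,u} \equiv 0$ (as $\phi$ vanishes on $\cZ_2$ by Lemma \ref{LemmaVanish}); one instead extracts $f_U$ from the values of $\phi$ on a family of simplices whose edges run in the directions picked out by $U$.

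With the candidates in hand, set $\psi(P) := \sum_{U \in \cU^{n-k}_P} f_U(V_k(P_U))$. By Lemma \ref{lemmaSumVal}, $\psi$ is a rational-$k$-homogeneous simple translation-invariant valuation, hence $\rho := \phi - \psi$ is one too, and by Lemma \ref{LemmaVanish} vanishes on $\cZ_{k+1}$. Lemma \ref{LemmaZero} reduces the theorem to showing $\rho$ also vanishes on $\cZ_k$. Using Lemma \ref{lemmaZ3} to rearrange Minkowski summands of $k$-cylinders (and Lemma \ref{LemmaModZ3} in the top case $k = n$ to reduce an arbitrary parallelepiped to an axis-aligned box on which $\phi = f_{()} \circ V_n$ by definition), I verify that $\phi$ and $\psi$ match on every $k$-cylinder by a face-by-face comparison justified by the defining construction of the $f_U$.

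The hardest part is the consistency check: the $f_U$ extracted from different prism reductions $(H, u)$ must coincide, and this has to be established purely algebraically since additive functions can be wildly discontinuous. Coupled with this is the face-counting on $k$-cylinders, where the oddness of $U \mapsto f_U$ is indispensable for the antipodal cancellations—exactly the mechanism underlying Lemma \ref{lemmaSumVal}. The special cases $k = 1$ and $k = n$ demand slightly different reductions (to simplices and to parallelepipeds, respectively), but in all cases the endgame is the same appeal to Lemma \ref{LemmaZero}.
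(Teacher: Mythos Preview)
Your outline has the right opening moves---the base case, the prism reduction via Lemma~\ref{lemmak-1homogeneous}, and the intention to subtract off a candidate $\psi$ and appeal to Lemma~\ref{LemmaZero}---but the endgame has a genuine gap. You propose to finish by showing $\rho = \phi - \psi$ vanishes on $\cZ_k$, arguing via ``face-by-face comparison'' and Lemma~\ref{lemmaZ3}. The trouble is that your prism reductions only give you control over $\rho$ on \emph{prisms} $P' + \overline{u}$, and Lemma~\ref{lemmaZ3} merely shuffles scale factors between Minkowski summands modulo $\cZ_3$; it does not convert a general $k$-cylinder into a union of prisms. Concretely, for $n = 4$, $k = 2$, the $2$-cylinder $T_1 + T_2$ with $T_1, T_2$ two-dimensional triangles in complementary planes is not a prism, and nothing in your toolkit shows $\rho(T_1 + T_2) = 0$. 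This obstruction arises whenever $k \le n/2$. Your acknowledged ``hardest part''---the consistency of $f_U$ across different choices of $(H,u)$---is a second, independent gap, and you give no mechanism for resolving it.

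The paper sidesteps both problems with a different architecture. It fixes a \emph{single} hyperplane $e_n^\perp$ and uses the prism reduction there to define $f_U$ only for frames $U$ lying in $e_n^\perp$ (setting all other $f_U$ to zero), so no consistency question arises. The residual $\psi$ then vanishes on all prisms with axis $e_n$, but that alone is not enough for Lemma~\ref{LemmaZero}. The decisive step you are missing is the \emph{skew-cylinder} construction: for each unit $u$ with $\langle u, e_n\rangle > 0$, one forms $\psi'_u(P') := \psi(\conv(P' \cup P'|_{e_n^\perp}))$ on $\cP^{n-1}(u^\perp)$, observes it is rational-\emph{$k$}-homogeneous (same $k$, not $k-1$), and applies the induction hypothesis a second time to obtain the remaining $f_U$'s. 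Finally, an arbitrary $P \in \cP^n_o$ is decomposed as a signed union of such skew cylinders via its shadow on $e_n^\perp$, which transfers the representation to all polytopes without ever needing to verify vanishing on $\cZ_k$ directly. This two-stage induction---once through prisms, once through skew cylinders---is the geometric idea that makes the proof go through.
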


\begin{proof} We use induction on the dimension $n$.

For $n=1$ there is only one choice for $k$, namely $k=n=1$. Any $P \in \cP_o^1$ is just a closed interval. Since $\phi$ is translation-invariant, it can only depend on the length $V_1(P)$ of $P$. By the additivity of $\phi$, this leads to $\phi(P) = f(V_1(P))$ for some additive $f$. Since on the right hand side of (\ref{rep1}) we only sum over $U \in \cU^0 = \left\{()\right\}$, we are done.

Now we consider the case of dimension $n\geq 2$. For $k \geq 2$, consider a $(k-1)$-cylinder lying in the orthogonal complement of $e_n$, so we take
  $$ P' = P_1+P_2+\ldots+P_{k-1} \in Z_{k-1}(e^\perp_n), $$
where $P_i \subseteq V_i$ for $1 \leq i \leq k-1$ and $V_1 \oplus \ldots \oplus V_{k-1}=e^\perp_n$.
For some $h>0$ we define the $k$-cylinder $P$ by
  $$ P=P'+h \overline{e_n}.$$
 Let us now apply Lemma \ref{lemmaZ3} to $P_{k-1}$ and $h^{\frac{k-2}{k-1}} \overline{e_n}$, which gives 
  $$P_1+\ldots+P_{k-2}+(P_{k-1}+h \overline{e_n}) \sim P_1+\ldots+P_{k-2}+(h^\frac{1}{k-1}P_{k-1}+h^\frac{k-2}{k-1} \overline{e_n}) \mod \cZ_{k+1}.$$
Repeating this for $P_1,\ldots,P_{k-2}$ results in
$$ P \sim h^\frac{1}{k-1} P'+\overline{e_n} \mod \cZ_{k+1}. $$
Since $\phi$ is a rational-$k$-homogeneous simple translation-invariant valuation, it vanishes on $\cZ_{k+1}$, as seen in Lemma \ref{LemmaVanish}. This leads to
\begin{equation} \label{PhiP}
\phi(P) = \phi(h^\frac{1}{k-1} P'+\overline{e_n}).
\end{equation}

Let us define
$$ \phi'_{e_n}(P'):=\phi(P'+\overline{e_n}),$$
for $P' \in \cP^{n-1}(e_n^\perp)$. By Lemma \ref{lemmak-1homogeneous} we know that $\phi'_{e_n}$ is a rational-$(k-1)$-homogeneous simple translation-invariant valuation. 

We can therefore apply our induction hypothesis to $\phi'_{e_n}$, which means that for all $U \in \cU^{(n-1)-(k-1)}(e_n^\perp)$ there exists an additive function $f'_U$ such that $U \mapsto f'_U$ is odd and for all $P' \in \cP^{n-1}(e^\perp_n)$ we have
$$ \phi'_{e_n}(P') = \sum_{U \in \cU^{n-k}_{P'}(e^\perp_n)} f'_U(V_{k-1}(P'_U)).$$

Note that for $U \in \cU^{n-k}_{P'}(e^\perp_n)$ we have $V_k((P'+ h \overline{e_n})_U)= h V_{k-1}(P'_U)$.  By the $(k-1)$-homogeneity of $V_{k-1}$, we have $V_{k-1}(h^\frac{1}{k-1} P'_U) = h V_{k-1}(P'_U)$. Using (\ref{PhiP}) leads to
\begin{align*}
 \phi(P) &= \phi(h^\frac{1}{k-1} P' + \overline{e_n})\\
 &=\phi'_{e_n}(h^\frac{1}{k-1} P') \\
 &= \sum_{U \in \cU^{n-k}_{P'}(e^\perp_n)} f'_U(h V_{k-1}(P'_U))\\
 &=\sum_{U \in \cU^{n-k}_P} f_U(V_k(P_U))
\end{align*}
for all $P = P'+h \overline{e_n}$ with $P' \in Z_{k-1}(e_n^\perp)$, where $f_U$ is given by
  $$f_U = \begin{cases} f'_U &\mbox{if } U \in \cU^{n-k}(e^\perp_n)\\
0 & \mbox{otherwise }  \end{cases}.$$
Clearly, this also holds for $P' \in \cZ_{k-1}(e_n^\perp)$.

Let us define
\begin{equation} \label{psi}
\psi(P):= \phi(P) - \sum_{U \in \cU^{n-k}_P} f_U(V_k(P_U))
\end{equation}
for all $P \in \cP^n$. Note that by Lemma \ref{lemmaSumVal} the map $\psi$ is again a rational-$k$-homogeneous simple translation-invariant valuation on $\cP^n$. By construction of $\psi$ we furthermore have
$$ \psi(P' + h \overline{e_n}) = 0$$
for all $P' \in \cZ_{k-1}(e^\perp_n)$ and $h>0$. The map $P' \mapsto \psi(P' + h \overline{e_n})$ is a rational-$(k-1)$-homogeneous simple translation-invariant valuation by Lemma \ref{lemmak-1homogeneous}. Moreover, it vanishes on $\cZ_{k-1}(e_n^\perp)$. By Lemma \ref{LemmaZero}, this map is therefore identically zero, which leads to
$$ \psi(P' + h \overline{e_n}) = 0 $$
for all $P' \in \cP^{n-1}(e_n^\perp)$.
Remember that we started this construction for $k \geq 2$. For $k=1$, we have $\phi(\cZ_2)=\{0\}$ by Lemma \ref{LemmaVanish}. Hence, we can take $\phi = \psi$ in this case and obtain $\psi(P' + h \overline{e_n}) = 0$ as well for all $P' \in \cP^{n-1}(e^\perp_n)$.

The next step is to consider skew-cylinders.
For this, take a unit vector $u \in S^{n-1}$ with $\ \left\langle  u,e_n \right\rangle > 0$. We consider $P' \in \cP^{n-1}(u^\perp)$ where we translate $P'$ such that it lies completely above $e^\perp_n$. Since $\psi$ vanishes on all cylinders of the form $P'+h \overline{e_n}$ for $P' \in \cP^{n-1}(e^\perp_n)$, the value of $\psi$ on skew-cylinders $\conv (P' \cup P'|_{e^\perp_n})$ does not depend on how we translated $P'$. Therefore,
\begin{equation} \label{eq: psi'_u}
\psi'_u(P'):=\psi(\conv (P' \cup P'|_{e^\perp_n}))
\end{equation}

is a well-defined simple translation-invariant valuation on $\cP^{n-1}(u^\perp)$. Moreover, since $\psi$ is rational-$k$-homogeneous, also $\psi'_u$ is. 

We can apply our induction hypothesis to $\psi'_{u}$. For all $U \in \cU^{(n-1)-k}(u^\perp)$ there exists an additive function $g'_{u,U}$ such that $U \mapsto g'_{u,U}$ is odd and
  $$ \psi'_{u}(P') = \sum_{U \in \cU^{(n-1)-k}_{P'}(u^\perp)} g'_{u,U}(V_k(P'_U))$$
for all $P' \in \cP^{n-1}(u^\perp)$. Let $P:=\conv (P'\cup P'|_{e_n^\perp})$ for some $P' \in \cP^{n-1}(u^\perp)$ that lies completely above $e_n^\perp$. Using \eqref{eq: psi'_u}, we obtain
\begin{align*} \label{skewCylinder}
 \psi(P)
 &= \psi'_{u}(P') \\
 &= \sum_{U \in \cU^{(n-1)-k}_{P'}(u^\perp)} g'_{u,U}(V_k(P'_U)) \\
 &= \sum_{U \in \cU^{n-k}_P} g_{u,U}(V_k(P_U)),
\end{align*}
where
$$g_{u,U} := \begin{cases} \pm g'_{\hat{U}} &\mbox{if } U=(\pm u,\hat{U}) \\
0 & \mbox{otherwise }  \end{cases}.$$

Comparing with \eqref{psi}, we obtain a representation for $\phi$ of the form
\begin{equation} \label{repSkew}
\phi(P) = \sum_{U \in \cU^{n-k}_P} h_U(V_k(P_U))
\end{equation}
for all skew-cylinders with base in $e_n^\perp$, where $h_U$ is an additive function for all $U \in \cU^{n-k}$ such that $U \mapsto h_U$ is odd.

Now we consider an arbitrary $P \in \cP^n_o$ which lies, without loss of generality, completely on one side of $e^\perp_n$.
Let $P'_i$, $i=1,\ldots, l$, be those facets whose normals have positive scalar product with $e_n$ and
$P'_i$, $i=l+1,\ldots, m$, be those facets whose normals have negative scalar product with $e_n$.
The polytope $\conv (P \cup P|_{e_n^\perp})$ can be decomposed in two ways. We have 
$$ \conv (P \cup P|_{e_n^\perp}) = P \sqcup S_1 \sqcup \ldots \sqcup S_l$$
and
$$ \conv (P \cup P|_{e_n^\perp}) = S_{l+1} \sqcup \ldots \sqcup S_m, $$
where $S_i:= \text{conv}(P'_i \cup P'_i|_{e^\perp_n})$ are skew-cylinders.
By the additivity of $\phi$, we get
$$\phi(P)= \sum_{i=l+1}^m \phi(S_i) - \sum_{i=1}^l \phi(S_i). $$
Hence, the representation \eqref{repSkew} holds for all $P \in \cP^n_o$.
\end{proof}

Now we are able to prove our main result.

\begin{theorem} \label{Th2}
  A map $\phi \colon \cP^n \rightarrow \R$ is a simple translation-invariant valuation if and only if
  for all $U \in \cU$ there exists an additive function $f_U \colon \R \to \R$ such that $U \mapsto f_U$ is odd and
  \begin{equation} \label{rep2}
    \phi(P) = \sum_{k=1}^n \sum_{U \in \cU^{n-k}_P} f_U(V_k(P_U))
  \end{equation}
  for all $P \in \cP^n$.
\end{theorem}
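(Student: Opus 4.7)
The ``if'' direction is immediate from Lemma \ref{lemmaSumVal}: for each fixed $k$ the inner sum
\[
  \phi_k(P) := \sum_{U \in \cU^{n-k}_P} f_U(V_k(P_U))
\]
is a (rational-$k$-homogeneous) simple translation-invariant valuation on $\cP^n$, and a finite sum of such valuations is again a simple translation-invariant valuation.

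For the ``only if'' direction the plan is to combine two tools already at our disposal: the homogeneous decomposition of Theorem \ref{homparts} and the homogeneous classification of Theorem \ref{Th1}. Given a simple translation-invariant valuation $\phi \colon \cP^n \to \R$, Theorem \ref{homparts} yields a decomposition
\[
  \phi = \sum_{k=1}^n \phi_k,
\]
where each $\phi_k$ is a rational-$k$-homogeneous simple translation-invariant valuation. Applying Theorem \ref{Th1} to each $\phi_k$ produces, for every $U \in \cU^{n-k}$, an additive function $f_U \colon \R \to \R$ such that $U \mapsto f_U$ is odd on $\cU^{n-k}$ and
\[
  \phi_k(P) = \sum_{U \in \cU^{n-k}_P} f_U(V_k(P_U))
\]
for all $P \in \cP^n$.

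Finally, one defines the map $U \mapsto f_U$ on all of $\cU = \bigcup_{k=0}^{n-1} \cU^k$ by gluing these choices. The oddness condition in the statement only requires that the restriction to each $\cU^k$ be odd, so no compatibility between different values of $k$ is needed, and gluing is legitimate. Summing the representations of $\phi_1,\ldots,\phi_n$ then gives \eqref{rep2}.

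There is essentially no obstacle here beyond bookkeeping; the work has all been done in Theorems \ref{homparts} and \ref{Th1}. The only point worth underlining is that the oddness requirement in Theorem \ref{Th2} is imposed level-by-level on the frames $\cU^k$, which is exactly the form produced by the level-wise application of Theorem \ref{Th1}.
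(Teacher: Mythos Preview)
Your proof is correct and follows essentially the same route as the paper: both invoke Lemma~\ref{lemmaSumVal} for the ``if'' direction, and for the converse both decompose $\phi$ via Theorem~\ref{homparts} into rational-$k$-homogeneous parts, apply Theorem~\ref{Th1} to each, and then assemble the resulting $f_U$'s over $\cU = \bigcup_k \cU^{n-k}$. Your remark that oddness is imposed level-by-level is exactly the point the paper relies on implicitly when setting $f_U := f^k_U$ for $U \in \cU^{n-k}$.
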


\begin{proof}
The right hand side of \eqref{rep2} is always a simple translation-invariant valuation by Lemma \ref{lemmaSumVal}
for any choice of $f_U \colon \R \to \R$, $U \in \cU$, as long as every $f_U$ is additive and $U \mapsto f_U$ is odd.

Assume that $\phi$ is a simple translation-invariant valuation.
By Theorem \ref{homparts}, $\phi$ can be decomposed as
$$ \phi = \sum_{k=1}^n \phi_k,$$
where $\phi_k$ is the rational-$k$-homogeneous part of $\phi$. By Theorem \ref{Th1}, for all $U \in \cU^{n-k}$ there exists an additive function $f^k_U$ such that $U \mapsto f^k_U$ is odd and
$$ \phi_k(P) = \sum_{U \in \cU^{n-k}_P} f^k_U(V_k(P_U))$$
for all $P \in \cP^n$. Hence, we have
$$ \phi(P) = \sum_{k=1}^n \phi_k(P) = \sum_{k=1}^n \sum_{U \in \cU^{n-k}_P} f_U(V_k(P_U))$$
for all $P \in \cP^n$, where $f_U := f^k_U$ for all $U \in \cU^{n-k}$ and $k=1,\ldots ,n$.
\end{proof}

The last theorem is syntactically similar to \cite[Theorem 19]{McMullen1989},
but it seems unlikely to the authors that it is easily possible to derive Theorem \ref{Th2} from it.
See, however, Remark \ref{re: classification from conditions}.

A valuation $\phi \colon \cP^n \to \R$ is called dilation-continuous if $\lambda \mapsto \phi(\lambda P)$, $\lambda > 0$, is continuous for all $P \in \cP^n$.
It is called weakly-continuous if it is continuous with respect to parallel displacements of individual facets (see \cite[p.\ 348]{Schneider2014}).
Clearly, weak-continuity implies dilation-continuity.
Also note that rational-$k$-homogeneity and dilation-continuity are equivalent to $k$-homogeneity.

The next result, originally due to Jessen and Thorup \cite{JessenThorup1978} and independently to Sah \cite{Sah1979}
(but note the comments in McMullen \cite{McMullen1983}),
can easily be proved using the same techniques as above.
Since the right hand side of \eqref{eq: representation dilation-continuous} is clearly weakly-continuous,
it implies that dilation-continuity and weak-continuity are equivalent for simple translation-invariant valuations.

\begin{theorem} \label{th: classification dilation}
  A map $\phi \colon \cP^n \to \R$ is a dilation-continuous simple translation-invariant valuation if and only if
  for all $U \in \cU$ there exists a constant $c_U \in \R$ such that $U \mapsto c_U$ is odd and
  \begin{equation} \label{eq: representation dilation-continuous}
    \phi(P) = \sum_{k=1}^n \sum_{U \in \cU^{n-k}_P} c_U V_k(P_U)
  \end{equation}
  for all $P \in \cP^n$.
\end{theorem}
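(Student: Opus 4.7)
The ``if'' direction is a direct application of Lemma~\ref{lemmaSumVal} with $f_U(x) := c_U x$: each $f_U$ is additive, oddness of $U \mapsto c_U$ translates to oddness of $U \mapsto f_U$, and the resulting valuation is dilation-continuous since $\phi(\lambda P) = \sum_{k,U} c_U \lambda^k V_k(P_U)$ is polynomial in $\lambda$.

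For the ``only if'' direction, the plan is to follow the inductive proof of Theorem~\ref{Th1} while strengthening its conclusion to produce constants in place of additive functions. First apply Theorem~\ref{homparts} to decompose $\phi = \sum_{k=1}^n \phi_k$ with $\phi_k$ rational-$k$-homogeneous. Dilation-continuity of $\phi$ combined with the uniqueness of polynomial coefficients applied to $\phi(\lambda P) = \sum_k \lambda^k \phi_k(P)$ (which agrees with the continuous function $\phi(\lambda P)$ on the dense set $\Q_{>0}$) upgrades each $\phi_k$ to full $k$-homogeneity, as noted in the paragraph preceding the theorem. It therefore suffices to handle a single $k$-homogeneous $\phi_k$ and show that the additive $f_U$ furnished by Theorem~\ref{Th1} are in fact linear.

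The induction on $n$ mirrors that of Theorem~\ref{Th1}. In the base case $n = 1$ one has $\phi(P) = f(V_1(P))$ with $f$ additive, and dilation-continuity of $\phi$ immediately yields continuity of $f$ on $(0,\infty)$ and hence linearity. For the inductive step, Theorem~\ref{Th1}'s construction assembles the representation from two rational-homogeneous auxiliary valuations: $\phi'_{e_n}(P') := \phi_k(P' + \overline{e_n})$ on $\cP^{n-1}(e_n^\perp)$, and the skew-cylinder valuation $\psi'_u$ on $\cP^{n-1}(u^\perp)$. Provided both are themselves dilation-continuous, the inductive hypothesis replaces their additive functions by constants, and reassembling exactly as at the end of Theorem~\ref{Th1}'s proof yields the desired constants $c_U$ for $\phi_k$.

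The main obstacle is thus verifying dilation-continuity of $\phi'_{e_n}$ (the argument for $\psi'_u$ is analogous). The $k$-homogeneity of $\phi_k$ gives $\phi'_{e_n}(\lambda P') = \lambda^k \phi_k(P' + \lambda^{-1}\overline{e_n})$, and the $\cZ_{k+1}$-equivalence $P' + h \overline{e_n} \sim h^{1/(k-1)} P' + \overline{e_n}$ from Lemma~\ref{LemmaModZ3} (together with the vanishing of $\phi_k$ on $\cZ_{k+1}$) reduces, for $P' \in \cZ_{k-1}(e_n^\perp)$, the study of $\phi'_{e_n}(\lambda P')$ to that of $G(h) := \phi_k(P' + h\overline{e_n})$ via $\phi'_{e_n}(\lambda P') = G(\lambda^{k-1})$. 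A stacking argument using simpleness and translation-invariance shows $G(h+h') = G(h) + G(h')$, so $G$ is additive with $G(q) = qG(1)$ for rational $q$; pairing this additivity with dilation-continuity of $\phi$ applied to $P' + \overline{e_n}$ and its dilates forces $G$ to be continuous, hence linear, so $\phi'_{e_n}$ is indeed $(k-1)$-homogeneous on cylinders. The extension from cylindrical $P'$ to all $P' \in \cP^{n-1}(e_n^\perp)$ is the exact analogue of the ``cylinders first, then skew-cylinders'' scheme at the end of Theorem~\ref{Th1}'s proof. With dilation-continuity of $\phi'_{e_n}$ (and $\psi'_u$) in hand, the induction closes and the constants $c_U$ are produced.
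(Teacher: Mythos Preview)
Your proposal follows the paper's approach: reduce to a single $k$-homogeneous piece, retrace the induction of Theorem~\ref{Th1}, and check that the auxiliary valuations $\phi'_{e_n}$ and $\psi'_u$ inherit enough homogeneity to invoke the inductive hypothesis with constants in place of additive functions. The overall strategy is correct.

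There is, however, one step that is not justified as written. You claim that ``dilation-continuity of $\phi$ applied to $P' + \overline{e_n}$ and its dilates forces $G$ to be continuous.'' Dilation-continuity only gives continuity of $\lambda \mapsto \phi_k(\lambda Q)$ for a \emph{fixed} $Q$; the family $h \mapsto P' + h\overline{e_n}$ is not a dilation family, so there is no direct route from the hypothesis to continuity of $G$. The argument can be repaired, but it requires reusing the $\cZ_{k+1}$-equivalence a second time: from $\phi_k(\lambda(P'+\overline{e_n})) = \lambda^k G(1)$ and $\lambda P' + \lambda\overline{e_n} \sim \lambda^{k/(k-1)} P' + \overline{e_n} \bmod \cZ_{k+1}$ one obtains $G(\lambda^k) = \lambda^k G(1)$, hence $G$ is linear outright (additivity is then not even needed). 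Also, your ``extension from cylindrical $P'$ to all $P'$'' should cite Lemma~\ref{LemmaZero} applied to $P' \mapsto \phi'_{e_n}(\lambda P') - \lambda^{k-1}\phi'_{e_n}(P')$, not the skew-cylinder decomposition at the end of Theorem~\ref{Th1}.

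The paper avoids both of these detours: it first runs the full proof of Theorem~\ref{Th1} to obtain the identity $\phi(\lambda P' + h\overline{e_n}) = \sum_U f'_U(\lambda^{k-1} h\, V_{k-1}(P'_U))$ for \emph{all} $P' \in \cP^{n-1}(e_n^\perp)$ (this is \eqref{eq: lambda^{k-1} h}, coming from $\psi(P'+h\overline{e_n})=0$), and then reads off $(k-1)$-homogeneity of $\phi'_{e_n}$ in a single line,
\[
  \phi'_{e_n}(\lambda P') = \phi(\lambda P' + \overline{e_n}) = \phi\bigl(\lambda^{(k-1)/k}(P' + \overline{e_n})\bigr) = \lambda^{k-1}\phi'_{e_n}(P'),
\]
using \eqref{eq: lambda^{k-1} h} for the middle equality and $k$-homogeneity of $\phi$ for the last. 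No separate analysis of an auxiliary $G$, and no extension step from $\cZ_{k-1}$, is required. For $\psi'_u$ the situation is even simpler than ``analogous'': once the $f_U$ are constants, $\psi$ is genuinely $k$-homogeneous and $\psi'_u(\lambda P') = \psi(\lambda\cdot\mathrm{conv}(P'\cup P'|_{e_n^\perp})) = \lambda^k \psi'_u(P')$ is immediate.
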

\begin{proof}
  Assume that $\phi$ is a dilation-continuous simple translation-invariant valuation.
  Using Theorem \ref{homparts}, we can assume without loss of generality that $\phi$ is $k$-homogeneous.
  We will retrace the proof of Theorem \ref{Th1} without repeating it here.

  For $n=1$ we have $\phi(P) = f(V_1(P))$ for some additive $f$ for all $P \in \cP^1$.
  Since $\phi$ is $1$-homogeneous, the function $f$ has to be linear.

  For the induction part there is only one non-obvious step.
  That is, we need to show that $\phi'_{e_n}$ is $(k-1)$-homogeneous.
  First note that we have
  \begin{equation} \label{eq: lambda^{k-1} h}
    \phi(\lambda P' + h \overline{e_n})
    = \sum_{U \in \cU^{n-k}_{P'}(e_n^\perp)} f'_U(\lambda^{k-1} h V_{k-1}(P'_U))
  \end{equation}
  for all $P' \in \cP^{n-1}(e_n^\perp)$, $\lambda > 0$ and $h > 0$,
  which follows from
  \[
    \psi(P' + h \overline{e_n}) = 0
  \]
  for all $P' \in \cP^{n-1}(e_n^\perp)$ and $h > 0$.
  Hence, we calculate
  \[
    \phi'_{e_n}(\lambda P')
    = \phi(\lambda P' + \overline{e_n})
    = \phi(\lambda^{\frac {k-1} k} P' + \lambda^{\frac {k-1} k} \overline{e_n})
    = \lambda^{k-1} \phi(P' + \overline{e_n})
    = \lambda^{k-1} \phi'_{e_n}(P') ,
  \]
  where we used \eqref{eq: lambda^{k-1} h} for the second equality.
  This completes one implication of the theorem.
  The other one is obvious.
\end{proof}

  \section{Conditions for Translative-Equidecomposability} \label{se: conditions}
We are now able to give new short proofs of the well-known conditions for translative-equidecomposability.
But first, recall Theorem \ref{FormalCriterion} for $G$ the translation group.

\begin{theorem}\label{th: G-equidecomposability for G = translations}
  Two elements $P$ and $Q$ of $\bigcup \cP^n_o$ are translative-equidecomposable if and only if
  \[
    \phi(P) = \phi(Q)
  \]
  for all simple translation-invariant valuations.
\end{theorem}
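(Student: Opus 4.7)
The plan is essentially trivial. This statement is simply Theorem~\ref{FormalCriterion} specialized to $G$ being the translation group of $\R^n$, as the sentence preceding the theorem already signals by inviting the reader to recall the formal criterion for that choice of $G$. Accordingly the proof I would write consists of one line: apply Theorem~\ref{FormalCriterion} with $G$ equal to the group of translations of $\R^n$.

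If one wishes to make the argument self-contained at this point, the two directions are as follows. For the forward direction, suppose $P \sim Q$ via $P = P_1 \sqcup \cdots \sqcup P_m$ and $Q = Q_1 \sqcup \cdots \sqcup Q_m$ with $P_i \simeq Q_i$ for $i = 1, \ldots, m$. Then, combining the additivity \eqref{eq: additivity} of a simple valuation $\phi$ with its translation-invariance, one obtains $\phi(P) = \sum_{i=1}^m \phi(P_i) = \sum_{i=1}^m \phi(Q_i) = \phi(Q)$. The reverse direction is precisely the nontrivial content of Hadwiger's formal main criterion in the specialized form, and no new work is required here: it is imported directly from Theorem~\ref{FormalCriterion}.

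The only genuinely difficult step has therefore already been carried out, namely the proof of the general formal criterion (Theorem~\ref{FormalCriterion}). The point of isolating this specialization as a separate theorem appears to be to set up the subsequent Corollary~\ref{co: condition for translative-equidecomposability}, where the abstract quantifier ``for all simple translation-invariant valuations'' will be replaced by the concrete family of equalities $H_U(P) = H_U(Q)$, $U \in \cU$, using the explicit classification of simple translation-invariant valuations obtained in Theorem~\ref{Th2}. In other words, the main obstacle for the program of this section is not this theorem but the next one, where Theorem~\ref{Th2} must be combined with an analysis of the Hadwiger functionals $H_U$ to reduce testing against \emph{all} simple translation-invariant valuations to testing against a concrete, explicitly described family.
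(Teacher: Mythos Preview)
Your proposal is correct and matches the paper's treatment exactly: the paper gives no separate proof of this theorem, presenting it simply as the specialization of Theorem~\ref{FormalCriterion} to the translation group, precisely as you identify. Your additional commentary on the forward direction and on the role of this theorem in setting up Corollary~\ref{co: condition for translative-equidecomposability} is accurate and goes slightly beyond what the paper spells out, but adds nothing incorrect.
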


Combining this with the classification of simple translation-invariant valuations obtained in Theorem \ref{Th2},
we get the following necessary and sufficient conditions for translative-equidecomposability as a simple corollary.
They were first proved by Jessen and Thorup \cite{JessenThorup1978} and independently by Sah \cite{Sah1979}.
Before that, in dimension $n=2$ respectively $n=3$ the problem of translative-equidecomposability was already solved by
Glur and Hadwiger \cite{GlurHadwiger1951} respectively Hadwiger \cite{Hadwiger1968}.
To formulate them, we define the (basic) Hadwiger functional $H_U$ for fixed $U \in \cU^{n-k}$, $k \in \{ 1, \ldots, n \}$, by
\[
  H_U(P) := \sum_{\eta \in \{ \pm 1 \}^{n-k}} \sgn (\eta) V_k \left( P_{\eta U} \right)
\]
for all $P \in \cP^n$.
By Lemma \ref{lemmaSumVal}, $H_U$ is a (rational-)$k$-homogeneous simple translation-invariant valuation.
Note that $H_{\eta U} = \sgn (\eta) H_U$ for all $\eta \in \{ \pm 1 \}^{n-k}$.

\begin{corollary} \label{co: condition for translative-equidecomposability}
  Two elements $P$ and $Q$ of $\bigcup \cP^n_o$ are translative-equidecomposable if and only if
  \begin{equation}\label{eq: condition for translative-equidecomposability}
    H_U(P) = H_U(Q)
  \end{equation}
  for all $U \in \cU$.
\end{corollary}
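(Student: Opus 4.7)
The plan is to combine Hadwiger's formal main criterion (Theorem \ref{th: G-equidecomposability for G = translations}) with the classification in Theorem \ref{Th2}. The forward implication is essentially automatic: by Lemma \ref{lemmaSumVal} each $H_U$ is itself a simple translation-invariant valuation, so $P \sim Q$ forces $H_U(P) = H_U(Q)$ for all $U \in \cU$ directly from Theorem \ref{th: G-equidecomposability for G = translations}.

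For the converse I would invoke Theorem \ref{th: G-equidecomposability for G = translations} in the opposite direction: it suffices to show that $H_U(P) = H_U(Q)$ for all $U \in \cU$ implies $\phi(P) = \phi(Q)$ for every simple translation-invariant valuation $\phi \colon \cP^n \to \R$. Apply Theorem \ref{Th2} to $\phi$ to obtain additive functions $f_U$, $U \in \cU$, with $U \mapsto f_U$ odd and
\[
  \phi(R) = \sum_{k=1}^n \sum_{U \in \cU^{n-k}_R} f_U(V_k(R_U))
\]
for every $R \in \cP^n$. The group $\{\pm 1\}^{n-k}$ acts freely on $\cU^{n-k}$ (no entry of a frame can vanish), and the heart of the proof is to collect the inner sum orbit by orbit. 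For an orbit representative $U_0$, oddness gives $f_{\eta U_0} = \sgn(\eta) f_{U_0}$, and additivity of $f_{U_0}$ then yields
\[
  \sum_{\eta \,:\, \eta U_0 \in \cU^{n-k}_R} f_{\eta U_0}(V_k(R_{\eta U_0}))
  = f_{U_0}\!\left( \sum_{\eta \in \{\pm 1\}^{n-k}} \sgn(\eta)\, V_k(R_{\eta U_0}) \right)
  = f_{U_0}(H_{U_0}(R)),
\]
where extending the inner sum to all $\eta$ is legitimate because $V_k(R_{\eta U_0}) = 0$ whenever $\eta U_0$ fails to be $R$-tight. Summing over orbit representatives (only finitely many of which contribute nontrivially) presents $\phi(P)$ and $\phi(Q)$ as the same formal expression in the numbers $H_{U_0}(P)$, respectively $H_{U_0}(Q)$, so the hypothesis closes the argument.

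The only delicate point is the vanishing $V_k(R_U) = 0$ for non-tight $U \in \cU^{n-k}$: orthonormality of the frame forces $\aff(R_U)$ into a translate of $\{u_1, \ldots, u_{n-k}\}^\perp$, so $\dim R_U \leq k$ with equality exactly on $\cU^{n-k}_R$. Modulo this standard observation the whole proof is essentially bookkeeping, which is consistent with the remark in the introduction that the classification in Theorem \ref{Th2} and the present conditions are ``more or less equivalent.''
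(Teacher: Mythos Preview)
Your overall strategy is exactly the paper's: combine the formal criterion (Theorem~\ref{th: G-equidecomposability for G = translations}) with the classification (Theorem~\ref{Th2}) and rewrite each $\phi(R)$ in terms of the numbers $H_U(R)$; your orbit-by-orbit bookkeeping is a cleaner variant of the paper's averaging by $\tfrac{1}{2^{n-k}}$.

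There is, however, a real gap at precisely the step you flag as ``delicate'': the assertion that $V_k(R_{\eta U_0}) = 0$ whenever $\eta U_0 \notin \cU^{n-k}_R$ is false. Your containment of $\aff(R_U)$ in a translate of $\{u_1,\ldots,u_{n-k}\}^\perp$ only yields $\dim R_U \leq k$; equality can occur for non-tight frames. For a concrete counterexample take $n=3$, $k=1$, and let $R \in \cP^3_o$ be any polytope whose face in direction $-e_3$ is an edge parallel to $e_1$ (for instance the convex hull of a trapezoid at height $z=1$ and a short segment parallel to $e_1$ at height $z=0$). Then $(-e_3, e_2)$ is not $R$-tight since $\dim R_{(-e_3)} = 1 < 2$, yet $e_2$ is orthogonal to the edge direction, so $R_{(-e_3, e_2)} = R_{(-e_3)}$ has dimension $1 = k$ and $V_1(R_{(-e_3, e_2)}) > 0$.

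What \emph{is} true, and suffices for your argument, is that the non-tight contributions cancel in pairs. If $\eta U_0$ is not $R$-tight but $\dim R_{\eta U_0} = k$, let $r$ be the first index at which tightness fails; the dimension drops by at least $2$ at step $r$, while the remaining $n-k-r$ steps must end at dimension exactly $k$, so some later step $j>r$ leaves the dimension unchanged, i.e.\ $(\eta U_0)_j \perp \aff R_{((\eta U_0)_1,\ldots,(\eta U_0)_{j-1})}$. Flipping the sign at the smallest such $j$ produces $\eta'$ with $R_{\eta' U_0} = R_{\eta U_0}$, $\sgn(\eta') = -\sgn(\eta)$, and the same $r$ and $j$, so this is a fixed-point-free involution. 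Hence
\[
\sum_{\eta:\,\eta U_0 \in \cU^{n-k}_R} \sgn(\eta)\, V_k(R_{\eta U_0}) \;=\; \sum_{\eta \in \{\pm1\}^{n-k}} \sgn(\eta)\, V_k(R_{\eta U_0}) \;=\; H_{U_0}(R),
\]
and the remainder of your proof goes through unchanged. (The paper's own one-line rewrite glosses over this same cancellation.)
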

\begin{proof}
  Clearly, if $P$ and $Q$ are translative-equidecomposable,
  then \eqref{eq: condition for translative-equidecomposability} holds
  since $H_U$ is a simple translation-invariant valuation for all $U \in \cU$.

  Conversely, assume that \eqref{eq: condition for translative-equidecomposability} holds for all $U \in \cU$.
  Let $\phi$ be a simple translation-invariant valuation.
  By Theorem \ref{Th2} we have the following representation
  \[
    \phi(R) = \sum_{k = 1}^n \sum_{U \in \cU^{n-k}_R} f_U \left( V_k \left( R_U \right) \right)
  \]
  for all $R \in \cP^n$, where $f_U$ is an additive function for all $U \in \cU$ and $U \mapsto f_U$ is odd.
  We can rewrite this to
  \[
    \phi(R) = \sum_{k = 1}^n \frac 1 {2^{n-k}} \sum_{U \in \cU^{n-k}_R} f_U \left( H_U(R) \right) .
  \]
  Hence, $\phi(P) = \phi(Q)$.
  Using Theorem \ref{th: G-equidecomposability for G = translations},
  we see that $P$ and $Q$ are translative-equidecomposable.
\end{proof}

\begin{remark} \label{re: classification from conditions}
  As we will see below, it is also possible to prove Theorem \ref{Th2} from Corollary \ref{co: condition for translative-equidecomposability}.
  Hence, these two results are more or less equivalent.
  However, the proofs of Jessen and Thorup \cite{JessenThorup1978} respectively Sah \cite{Sah1979} of Corollary \ref{co: condition for translative-equidecomposability}
  are much longer than the proof of Theorem \ref{Th2}.

  Consider the map $\mathcal H(P) := (H_U(P))_{U \in \cU}$.
  This is a map from $\bigcup \cP^n_o$ to
  \[
    \R^{\cU*} := \{ (c_U)_{U \in \cU} \in \R^{\cU} : \text{$c_U \neq 0$ for only finitely many $U \in \cU$} \} .
  \]
  Clearly, $\mathcal H(\bigcup \cP^n_o)$ is an abelian subsemigroup of $\R^{\cU*}$.

  Let $\phi$ be a simple translation-invariant valuation.
  By Corollary \ref{co: condition for translative-equidecomposability}
  we can define an additive map $\tilde \phi$ from $\mathcal H(\bigcup \cP^n_o)$ to $\R$
  such that $\tilde \phi \circ \mathcal H  = \phi$.
  We can additively extend $\tilde \phi$ first to the abelian subgroup generated by $\mathcal H(\bigcup \cP^n_o)$
  and then, since $\R^{\cU*}$ is divisible, to the whole abelian group $\R^{\cU*}$.
  This extension is not necessarily unique.
  Nevertheless, we will also denote it by $\tilde \phi$.
  For $U \in \cU$ define
  \[
    f_U(x) := \tilde \phi( x \delta_U )
  \]
  for all $x \in \R$, where $\delta_U$ denotes the tuple that has entry $1$ at $U$ and is zero otherwise.
  Clearly, $f_U$ is additive.
  Furthermore, since $U \mapsto H_U$ is odd,
  we can choose the extension $\tilde \phi$ in such a way that $U \mapsto f_U$ is odd.
  We have
  \begin{align*}
    \phi(P)
    &= \tilde \phi( \mathcal H(P) ) \\
    &= \tilde \phi( \sum_{U \in \cU_P} H_U(P) \delta_U ) \\
    &= \sum_{U \in \cU_P} \tilde \phi( H_U(P) \delta_U ) \\
    &= \sum_{U \in \cU_P} f_U( H_U(P) ) \\
    &= \sum_{k = 1}^n 2^{n-k} \sum_{U \in \cU^{n-k}_P} f_U( V_k(P_U) ) ,
  \end{align*}
  which is what we wanted to show.
\end{remark}

  \section{Classifications of Klain and Schneider} \label{se: Klain-Schneider}
We can also use the techniques from the proof of Theorem \ref{Th1}
to \emph{simultaneously} prove classification theorems of Klain \cite{Klain1995} (see also \cite{KlainRota1997})
and Schneider \cite{Schneider1996}
for \emph{continuous} simple translation-invariant valuations.
Klain's proof of his classification of \emph{even} valuations
is already mostly elementary and our new proof will reuse some of its ideas.
However, our new proof simplifies Schneider's proof of his classification of \emph{odd} valuations considerably.
In particular, it avoids the use of triangle bodies.

The next lemma and its proof is implicitly contained in Klain \cite{Klain1995}.
We include its proof to keep the exposition self-contained.
Here, a prism is a $2$-cylinder that has some line segment as summand. 
A zonotope is a polytope which can be written as a Minkowski sum of line segments
and a zonoid is a convex body which can be approximated in the Hausdorff metric by zonotopes.
Finally, a convex body $K$ is called a generalized zonoid if there exist zonoids $Y$ and $Z$ such that
\[
  K + Y = Z .
\]

\begin{lemma}\label{le: Klain}
  Let $\phi \colon \mathcal K^n \to \R$ be a continuous simple translation-invariant valuation.
  If $\phi$ is even and vanishes on all prisms, then it vanishes everywhere.
\end{lemma}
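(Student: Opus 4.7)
My plan is to reduce to a $j$-homogeneous valuation via the McMullen decomposition, show vanishing on zonoids and then on generalized zonoids using the Minkowski-polynomial structure, pass to all centrally symmetric bodies by density, and finally remove the symmetry assumption via a tiling of the difference body of a simplex.

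First, I would apply the McMullen decomposition to write $\phi = \sum_{j=0}^n \phi_j$, where each $\phi_j$ is continuous, even, simple, translation-invariant, and $j$-homogeneous. Since $\lambda P$ is a prism whenever $P$ is, comparing coefficients in $\phi(\lambda P) = \sum_j \lambda^j \phi_j(P) = 0$ shows that each $\phi_j$ vanishes on every prism, so it suffices to treat each $\phi_j$ separately. The extreme cases are immediate: $\phi_0 \equiv 0$ by continuity and simplicity (let a body shrink to a point), and $\phi_n = c V_n$ by Hadwiger's theorem in $\R^n$, with $c = 0$ because the unit cube is a prism for $n \geq 2$. This leaves the range $1 \leq j \leq n-1$.

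Next I would show $\phi_j$ vanishes on every zonoid. Any parallelepiped $[0,v_1] + \ldots + [0,v_n]$ is a prism---group the first $n-1$ summands into an $(n-1)$-dimensional parallelepiped in a hyperplane transverse to $v_n$---so $\phi_j$ vanishes on parallelepipeds. Every full-dimensional zonotope tiles into parallelepipeds with pairwise disjoint interiors, so additivity gives vanishing on zonotopes, and continuity extends this to zonoids. For a generalized zonoid $K$ satisfying $K + Y = Z$ with $Y, Z$ zonoids, note that $K + \lambda Y = Z + (\lambda - 1) Y$ is a Minkowski sum of zonoids for every $\lambda \geq 1$, hence a zonoid, so $\phi_j(K + \lambda Y) = 0$ on $[1, \infty)$. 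By McMullen's polynomial expansion, $\lambda \mapsto \phi_j(K + \lambda Y)$ is a polynomial of degree at most $j$, and vanishing on a half-line forces identical vanishing; evaluation at $\lambda = 0$ yields $\phi_j(K) = 0$. Since generalized zonoids are dense in the class of centrally symmetric convex bodies (Weil), continuity extends the vanishing to all centrally symmetric convex bodies.

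To remove the symmetry assumption, I would appeal to a Rogers--Shephard-type decomposition of the difference body: for every simplex $S \subseteq \R^n$, the centrally symmetric body $S + (-S)$ tiles into $\binom{2n}{n}$ translates of $S$ and $-S$ with pairwise disjoint interiors. Combining additivity, translation-invariance, and evenness of $\phi_j$ yields $0 = \phi_j(S + (-S)) = \binom{2n}{n}\, \phi_j(S)$, so $\phi_j(S) = 0$ for every simplex. Triangulation and additivity then give $\phi_j \equiv 0$ on polytopes, and continuity extends this to $\cK^n$. The main obstacle in this plan is the last tiling step: the decomposition is easy to exhibit in the plane (the hexagon $T + (-T)$ splits via its three diagonals through the centroid into three translates of $T$ and three of $-T$), but in higher dimensions cleanly describing or citing the simplex tiling requires more care.
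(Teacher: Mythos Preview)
Your overall strategy mirrors the paper's---prisms to zonoids to generalized zonoids to all centrally symmetric bodies, then simplices---but the execution is heavier than necessary and the final step is wrong.

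The McMullen decomposition is avoidable. The paper observes directly that $P + \overline u$ decomposes into $P$ together with prisms, so $\phi(P + \overline u) = \phi(P)$; iterating and passing to limits gives $\phi(K + Z) = \phi(K)$ for every zonoid $Z$. For a generalized zonoid $K$ with $K + Y = Z$ one then has $\phi(K) = \phi(K+Y) = \phi(Z) = 0$ in one line, with no homogeneity or polynomiality needed.

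The genuine gap is the last step. Your claimed tiling of $S + (-S)$ by $\binom{2n}{n}$ translates of $\pm S$ fails already for $n = 3$. Take $S = \conv\{0, e_1, e_2, e_3\}$; then $S - S$ is a cuboctahedron whose facet in direction $(1,1,0)$ is the parallelogram $\conv\{e_1, e_2, e_1 - e_3, e_2 - e_3\}$. Any tile $\pm S + v$ contained in $S - S$ lies in the halfspace $\{x_1 + x_2 \le 1\}$, so it meets the hyperplane $\{x_1 + x_2 = 1\}$ in its own face in direction $(1,1,0)$. But the face of $S$ in that direction is the edge $\conv\{e_1, e_2\}$, and the face of $-S$ is the edge $\conv\{0, -e_3\}$---both one-dimensional. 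Finitely many segments cannot cover a two-dimensional facet, so no such tiling exists. The Rogers--Shephard identity $V_n(S - S) = \binom{2n}{n} V_n(S)$ is of course true, but it does not come from a tiling by translates of $\pm S$.

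The paper's replacement is far simpler: let $T$ be the parallelotope spanned by $u_1, \ldots, u_n$ and $S = \conv\{0, u_1, \ldots, u_n\}$. The central symmetry of $T$ swaps $S$ with the translate $u_1 + \cdots + u_n + (-S)$, so $T$ decomposes into these two simplices and a centrally symmetric remainder. Since $T$ is itself centrally symmetric and $\phi$ is even, $0 = \phi(T) = 2\phi(S)$.
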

\begin{proof}
  Let $P \in \cP^n_o$ and $u \in \R^n \setminus \{ 0 \}$.
  The Minkowski sum $P + \overline u$ can be decomposed into $P$ and prisms.
  Hence, $\phi(P + \overline u) = \phi(P)$.
  By induction we see that $\phi(P+Z) = \phi(P)$ for every zonotope $Z \in \cP^n$.
  By continuity $\phi(K + Z) = \phi(K)$ for all $K \in \cK^n$ and all zonoids $Z \in \cK^n$.
  In particular, $\phi$ vanishes on zonoids.
  From the last two facts we deduce that if $K \in \cK^n$ is a generalized zonoid, then $\phi(K) = 0$.
  Since generalized zonoids are dense in the set of centrally symmetric convex bodies
  (see e.g.\ \cite[Corollary 3.5.7]{Schneider2014}),
  $\phi$ vanishes on centrally symmetric convex bodies.

  Let $u_1, \ldots, u_n \in \R^n$ be linearly independent.
  Let $S := \conv \{ 0, u_1, \ldots, u_n \}$
  and $T$ be the parallelotope spanned by $u_1, \ldots, u_n$.
  We can decompose $T$ into $S$, $u_1 + \ldots + u_n + (-S)$ and some centrally symmetric polytope.
  Since $T$ itself is also centrally symmetric,
  $\phi$ is even and $\phi$ vanishes on centrally symmetric convex bodies, $\phi(S) = 0$.
  Therefore, $\phi$ vanishes on all $n$-simplices.
  Clearly, $\phi$ now has to vanish everywhere.
\end{proof}

We will also need the following theorem taken from McMullen \cite{McMullen1980} (compare also Sah \cite{Sah1979}).
Again, we include its proof for completeness.

\begin{theorem}\label{th: McMullen}
  Let $\phi \colon \mathcal P^n \to \R$ be a continuous simple translation-invariant valuation.
  If $\phi$ is $(n-1)$-homogeneous,
  then there exists a continuous odd function $f \colon S^{n-1} \to \R$ such that
  \begin{equation}\label{eq: (n-1)-homogeneous}
    \phi(P) = \sum_{u \in S^{n-1}} f(u) V_{n-1}(P_{(u)})
  \end{equation}
  for all $P \in \mathcal P^n$.
  Furthermore, $f$ is unique up to restrictions of linear functions.
\end{theorem}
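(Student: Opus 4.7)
The plan is to reduce to Theorem \ref{th: classification dilation}, which already gives the representation with \emph{constant} coefficients under the weaker hypothesis of dilation-continuity, and then upgrade those coefficients to a continuous function on $S^{n-1}$ by exploiting full Hausdorff continuity of $\phi$. Since continuity implies dilation-continuity and $\phi$ is $(n-1)$-homogeneous, Theorem \ref{th: classification dilation} produces an odd map $u \mapsto c_u$ on $\cU^1 = S^{n-1}$ such that
\begin{equation*}
  \phi(P) = \sum_{u \in \cU^1_P} c_u V_{n-1}(P_{(u)})
\end{equation*}
for every $P \in \cP^n$. Setting $f(u) := c_u$ gives an odd function reproducing \eqref{eq: (n-1)-homogeneous}; existence is then reduced to proving that $f$ is continuous on $S^{n-1}$.

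For continuity, fix $u_0 \in S^{n-1}$ and, after an orthogonal change of coordinates, assume $u_0 = e_n$. I would introduce the test polytope
\begin{equation*}
  P_u := [-1,1]^n \cap \{ x \in \R^n : \left\langle x, u \right\rangle \leq 1/2 \}
\end{equation*}
for $u$ in a small neighborhood of $e_n$. For such $u$ the hyperplane $\left\langle \cdot, u \right\rangle = 1/2$ meets the interior of the cube cleanly, so $P_u$ has exactly one new facet, with outer unit normal $u$ and positive $(n-1)$-volume $a(u)$ depending continuously on $u$; the remaining facets of $P_u$ have outer normals in the fixed finite set $\{-e_n, \pm e_1, \ldots, \pm e_{n-1}\}$ and $(n-1)$-volumes that also depend continuously on $u$. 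Combined with the representation this yields
\begin{equation*}
  \phi(P_u) = f(u)\,a(u) + \sum_{v \in \{-e_n,\, \pm e_1,\, \ldots,\, \pm e_{n-1}\}} f(v)\,V_{n-1}((P_u)_{(v)}).
\end{equation*}
Because $P_u$ varies continuously in the Hausdorff metric and $\phi$ is continuous, the left side is continuous in $u$; every summand on the right is continuous in $u$; and $a(u) > 0$ is continuous. Hence $f$ is continuous at $u_0$, and, $u_0$ being arbitrary, $f \in C(S^{n-1})$.

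For uniqueness, if $f_1, f_2 \colon S^{n-1} \to \R$ are continuous odd functions both representing $\phi$, then $g := f_1 - f_2$ satisfies $\sum_{u \in \cU^1_P} g(u)\,V_{n-1}(P_{(u)}) = 0$ for every $P \in \cP^n$. The Minkowski relation $\sum_u u\,V_{n-1}(P_{(u)}) = 0$ shows that every restriction of a linear functional to $S^{n-1}$ lies in this kernel; conversely, extending $g$ against surface area measures by Hausdorff continuity from $\cP^n$ to $\cK^n$ and invoking the density of the positive cone generated by surface area measures of convex bodies within the cone of finite Borel measures on $S^{n-1}$ with vanishing first moment (a consequence of Minkowski's existence theorem), any continuous $g$ in the kernel must be the restriction of a linear functional.

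I expect the continuity step to be the main obstacle: the key geometric move is to exhibit a one-parameter family of test polytopes that isolates a single variable facet normal, keeps the remaining combinatorial facet structure stable, and has every facet area depending continuously on the parameter. Once this is in place, the rest reduces to applying Theorem \ref{th: classification dilation} and invoking the classical theory of surface area measures.
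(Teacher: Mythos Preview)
Your proposal is correct and follows essentially the same strategy as the paper: obtain the odd function $f$ from Theorem~\ref{th: classification dilation}, then prove its continuity by evaluating $\phi$ on a continuous family of test polytopes in which exactly one facet normal varies while the remaining normals stay in a fixed finite set. The paper uses simplices rather than truncated cubes---choosing outer normals $u, b_1, \ldots, b_n$ with the $b_i$ a basis in which $u_0$ has all positive coordinates, and first normalizing $f$ by a linear function so that $f(b_i)=0$, which yields $f(u)=\phi(S^u)$ directly and handles uniqueness by the same construction---but the underlying idea is identical, and your surface-area-measure argument for uniqueness is an equally standard alternative.
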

\begin{proof}
  By Theorem \ref{th: classification dilation} there exists an odd function $f \colon S^{n-1} \to \R$
  such that \eqref{eq: (n-1)-homogeneous} holds.

  Let $u_0 \in S^{n-1}$.
  Choose a basis $b_1, \ldots, b_n \in S^{n-1}$
  such that all coordinates of $u_0$ with respect to this basis are positive.
  For $u \in S^{n-1}$ in some neighborhood of $u_0$
  let $S^u$ be a simplex with outer normals $u, b_1, \ldots, b_n$ and $V_{n-1}(S^u_{(u)}) = 1$.
  We can easily choose $S^u$ in such a way that $u \mapsto S^u$ is continuous in some neighborhood of $u_0$.
  It is well known (see e.g.\ \cite[Lemma 5.1.1]{Schneider2014})
  that if $f$ in \eqref{eq: (n-1)-homogeneous} is the restriction of a linear function,
  then the right hand side vanishes.
  Hence, we can assume without loss of generality that $f(b_i) = 0$, $i \in \{ 1, \ldots n \}$.
  We conclude $f(u) = \phi(S^u)$.
  Therefore, if $\phi$ is continuous, then $f$ is continuous in some neighborhood of $u_0$.
  Since $u_0$ was arbitrary, $f$ is continuous everywhere.
  Furthermore, if $\phi$ vanishes, then we see that $f$ vanishes up to restrictions of linear functions.
\end{proof}

In a remark in \cite{Schneider1996} Schneider asks whether Hadwiger's characterization
of weakly-continuous simple translation-invariant valuations
could be used to prove the classification in the continuous case.
In a sense the following lemma gives a positive answer.
It adapts the ideas of the proof of Theorem \ref{Th1}.

\begin{lemma}\label{le: k-homogeneous, k geq n-2}
  Let $n \geq 2$ and $\phi \colon \cK^n \to \R$ be a continuous simple translation-invariant valuation.
  If $\phi$ is $k$-homogeneous with $k \leq n-2$, then $\phi = 0$.
\end{lemma}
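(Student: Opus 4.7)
My plan is to prove the lemma by induction on $n$, adapting the scheme of the proof of Theorem \ref{Th1}. The base case $n = 2$ is essentially trivial since it forces $k = 0$, and any continuous simple $0$-homogeneous valuation vanishes: $\phi(K) = \phi(\lambda K) \to \phi(\{0\}) = 0$ as $\lambda \to 0^+$, by continuity and simplicity.

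For the inductive step with $n \geq 3$, I would first invoke a continuous version of Lemma \ref{lemmak-1homogeneous}. For any hyperplane $H \subset \R^n$ and $u \notin H$, the map $\phi'(P') := \phi(P' + \overline u)$ on $\cK^{n-1}(H)$ is a continuous simple translation-invariant $(k-1)$-homogeneous valuation, and $k - 1 \leq (n-1) - 2$; the inductive hypothesis (or, for $k = 1$, the $0$-homogeneous argument from the base case) gives $\phi' \equiv 0$. Combined with $k$-homogeneity, continuity, and the arbitrariness of $H$ and $u$, this forces $\phi$ to vanish on every prism.

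Decomposing $K + \overline u$ into $K$ plus prisms as in the proof of Lemma \ref{le: Klain}, iterating, and using continuity, I get $\phi(K + Z) = \phi(K)$ for every zonoid $Z$; the generalized-zonoid density argument then yields $\phi(K) = 0$ on all centrally symmetric convex bodies. Splitting $\phi = \phi_{\mathrm{even}} + \phi_{\mathrm{odd}}$ (both parts inheriting the hypotheses), Lemma \ref{le: Klain} applies directly to $\phi_{\mathrm{even}}$ and gives $\phi_{\mathrm{even}} \equiv 0$.

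The main obstacle is $\phi_{\mathrm{odd}}$; my plan is to mimic the skew-cylinder step in the proof of Theorem \ref{Th1}. For a unit vector $u$ with $\langle u, e_n \rangle > 0$, the already established vanishing of $\phi$ on vertical prisms guarantees that $\phi''_u(P') := \phi(\conv(P' \cup P'|_{e_n^\perp}))$, for $P' \in \cK^{n-1}(u^\perp)$ positioned above $e_n^\perp$, is well-defined, and it is a continuous simple translation-invariant $k$-homogeneous valuation on $\cK^{n-1}(u^\perp)$. For $k \leq n - 3$ the inductive hypothesis forces $\phi''_u \equiv 0$. The delicate situation is the boundary case $k = n - 2$, where $\phi''_u$ is $(n-2)$-homogeneous on $\cK^{n-1}(u^\perp)$ and only falls under Theorem \ref{th: McMullen}: it admits a representation $\phi''_u(P') = \sum_v f(v) V_{n-2}(P'_v)$ for a continuous odd $f$, unique up to restrictions of linear functions. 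Showing that $f$ in fact reduces to a restriction of a linear function---by combining the already established vanishing of $\phi$ on centrally symmetric bodies (which pushes down to suitable vanishing of $\phi''_u$ on test bodies in $u^\perp$) with the uniqueness clause of Theorem \ref{th: McMullen}---so that $\phi''_u \equiv 0$, is the crux of the argument. Once $\phi''_u \equiv 0$, the facet-decomposition argument at the end of the proof of Theorem \ref{Th1} propagates vanishing from skew-cylinders to all of $\cP^n_o$, and continuity extends this to $\cK^n$.
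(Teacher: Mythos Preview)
Your argument for $k \le n-3$ is essentially the paper's: pass to $\phi'_{e_n}$ and to the skew-cylinder valuations $\psi'_u$, apply the induction hypothesis in dimension $n-1$, and conclude via the facet decomposition. Likewise, your treatment of the even part in the case $k = n-2$ (vanishing on prisms plus Lemma~\ref{le: Klain}) coincides with what the paper does.

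The gap is precisely where you flag it: the odd part in the case $k = n-2$. Your plan is to show that the skew-cylinder valuation $\phi''_u$ on $\cK^{n-1}(u^\perp)$, which is $(n-2)$-homogeneous and hence only covered by Theorem~\ref{th: McMullen}, has a representing function $f$ that is the restriction of a linear map. You propose to force this from the vanishing of $\phi$ on centrally symmetric bodies, but there is no evident mechanism: for a centrally symmetric $P' \subset u^\perp$, the skew cylinder $\conv(P' \cup P'|_{e_n^\perp})$ is \emph{not} centrally symmetric, so the information does not ``push down'' to $\phi''_u$ on any usable class of test bodies. As stated, this step is a hope rather than an argument.

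The paper closes this gap differently and much more cheaply: it observes, via Theorem~\ref{Th1}, that any $(n-2)$-homogeneous simple translation-invariant valuation is automatically \emph{even}. Indeed, the frame representation involves $U \in \cU^{2}$, and for $\eta = (-1,-1)$ one has $\sgn(\eta) = +1$; since $(-P)_{-U} = -P_U$ and $V_{n-2}$ is invariant under reflection, the representation gives $\phi(-P) = \phi(P)$ on polytopes, hence on $\cK^n$ by continuity. Thus $\phi_{\mathrm{odd}} = 0$ for free, and Lemma~\ref{le: Klain} finishes the case $k = n-2$ directly. You should replace your skew-cylinder attack on $\phi_{\mathrm{odd}}$ by this parity observation.
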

\begin{proof}
  We will show the result by induction.
  For $n = 2$ it follows directly from Theorem \ref{homparts} and the continuity of $\phi$.
  Assume $n \geq 3$ and that the theorem holds in dimension $n-1$.

  First assume $k \leq n-3$.
  We retrace the proof of Theorem \ref{Th1} without repeating it here.
  There we constructed a representation for $\phi$ from valuations on $\cP^{n-1}$.
  However, if $\phi$ is defined on $\cK^n$, we can also define these valuations on $\cK^{n-1}$.
  Clearly, $\phi'_{e_n}$ is continuous if $\phi$ is.
  By the induction assumption $\phi'_{e_n} = 0$.
  Hence, $\psi$ will be identical to $\phi$.
  Now, the valuations $\psi'_u$ will also be continuous.
  Again, by the induction assumption $\psi'_u = 0$.
  Therefore, $\phi$ vanishes on $\cP^n$ and by continuity also on $\cK^n$.

  If $k = n-2$, then $\phi$ is even by Theorem \ref{Th1}.
  Let $v_0 \in S^{n-1}$ and $v \in \R^n \setminus v_0^\perp$.
  Consider $P' \mapsto \phi(P' + \overline v)$, $P' \in \cK^{n-1}(v_0^\perp)$.
  Similar to above this map satisfies the induction assumption by Lemma \ref{lemmak-1homogeneous}.
  Hence, it vanishes.
  Lemma \ref{le: Klain} now implies $\phi = 0$.
\end{proof}

\begin{remark}
  It is an open problem whether or not every continuous valuation on $\cP^n$ extends to $\cK^n$.
  Hence, it would be interesting to show Theorem \ref{th: Klain-Schneider} for valuations
  that are only defined on $\cP^n$.
  The proof of Lemma \ref{le: k-homogeneous, k geq n-2} almost works for $\phi \colon \cP^n \to \R$.
  In fact, the assumption that $\phi$ is defined on $\cK^n$ is only needed to use Lemma \ref{le: Klain}.
  Note that this lemma is only applied in the case $k = n-2$.
  For $n \geq 5$ we can easily avoid using Lemma \ref{le: Klain}.
  Assume all summands of an $(n-2)$-cylinder have dimension at least $2$.
  It is easy to see that this is only possible for $n \leq 4$.
  Hence, for $n \geq 5$, every $(n-2)$-cylinder is a prism
  and we can use Lemma \ref{LemmaZero}.
  Only the cases $n = 3$, $k = 1$ and $n = 4$, $k = 2$ remain.
  These are also the only cases where the continuity assumption must be used in a non-trivial way.
  To summarize, if one could prove Lemma \ref{le: k-homogeneous, k geq n-2}
  for $\phi \colon \cP^n \to \R$ in the critical cases $n = 3$, $k = 1$ and $n = 4$, $k = 2$,
  then it could be extended to all $n \geq 2$ and $k \leq n-2$ using the arguments above.
\end{remark}

Finally, we can prove the classification result(s) of Klain \cite{Klain1995} and Schneider \cite{Schneider1996}.
Here, $S_K$ denotes the surface area measure of $K$.
For a polytope $P$ we have
\[
  S_P = \sum_{u \in S^{n-1}} V_{n-1}(P_{(u)}) \delta_u ,
\]
where $\delta_u$ denotes the Dirac measure at $u$.
Also note that $K \mapsto S_K$ is weakly-continuous in the sense of measures.

\begin{theorem} \label{th: Klain-Schneider}
  Let $n \geq 2$.
  A map $\phi \colon \cK^n \to \R$ is a continuous simple translation-invariant valuation if and only if
  there exist a constant $c \in \R$ and a continuous odd function $f \colon S^{n-1} \to \R$ such that
  \[
    \phi(K) = c V(K) + \int_{S^{n-1}} f \ dS_K
  \]
  for all $K \in \cK^n$.
  Furthermore, $f$ is unique up to restrictions of linear functions.
\end{theorem}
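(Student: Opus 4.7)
The easy direction is to check that both $K \mapsto V(K)$ and $K \mapsto \int_{S^{n-1}} f \, dS_K$ are continuous simple translation-invariant valuations on $\cK^n$: volume is classical, and for the surface area integral the valuation and translation-invariance properties come from the corresponding properties of $K \mapsto S_K$, simplicity follows since $S_K$ is the zero measure for lower-dimensional $K$, and continuity follows from the weak continuity of $K \mapsto S_K$ together with continuity of $f$.

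For the converse, assume $\phi \colon \cK^n \to \R$ is a continuous simple translation-invariant valuation. Continuity in the Hausdorff metric implies dilation-continuity, so by Theorem \ref{homparts} applied to $\phi|_{\cP^n}$ (and using dilation-continuity to upgrade rational-$k$-homogeneity to genuine $k$-homogeneity) we obtain a decomposition $\phi|_{\cP^n} = \phi_1 + \ldots + \phi_n$ into $k$-homogeneous simple translation-invariant valuations on $\cP^n$. Since the $\phi_k$ are determined by the polynomial expansion $\phi(\lambda P) = \sum_k \lambda^k \phi_k(P)$, each $\phi_k$ inherits continuity from $\phi$ on the dense subset $\cP^n$ and extends uniquely to a continuous simple translation-invariant valuation on $\cK^n$. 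By Lemma \ref{le: k-homogeneous, k geq n-2} we conclude $\phi_k = 0$ for all $k \leq n-2$, so only $\phi_{n-1}$ and $\phi_n$ survive.

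For the top-degree piece, Theorem \ref{th: classification dilation} applied to the $n$-homogeneous summand gives $\phi_n(P) = c V(P)$ for some constant $c \in \R$ (since $\cU^0 = \{()\}$ and $P_{()} = P$); by continuity this extends to $\phi_n(K) = c V(K)$ on all of $\cK^n$. For the $(n-1)$-homogeneous piece, Theorem \ref{th: McMullen} supplies a continuous odd function $f \colon S^{n-1} \to \R$, unique up to restrictions of linear functions, such that
\[
  \phi_{n-1}(P) = \sum_{u \in S^{n-1}} f(u) V_{n-1}(P_{(u)}) = \int_{S^{n-1}} f \, dS_P
\]
for every $P \in \cP^n$, where the second equality uses the explicit form of the surface area measure of a polytope recalled just before the theorem. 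Since both $\phi_{n-1}$ and $K \mapsto \int f \, dS_K$ are continuous on $\cK^n$ and agree on the dense subset $\cP^n$, they agree everywhere. Adding the two pieces yields the claimed representation, and the uniqueness of $f$ is inherited directly from Theorem \ref{th: McMullen}.

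The main obstacle is really bookkeeping rather than genuine difficulty: one must verify that each homogeneous component $\phi_k$ is itself a \emph{continuous} valuation on $\cK^n$ (so that Lemma \ref{le: k-homogeneous, k geq n-2} applies to the low-dimensional pieces), and one must justify passing from the polytopal representation of $\phi_{n-1}$ to its integral representation on all convex bodies, which rests on the density of $\cP^n$ in $\cK^n$ and the weak continuity of $S_K$.
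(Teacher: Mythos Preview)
Your proposal is correct and follows essentially the same route as the paper's proof, which simply cites Theorem~\ref{homparts} together with continuity, Theorem~\ref{th: McMullen}, and Lemma~\ref{le: k-homogeneous, k geq n-2} for one direction and declares the other obvious. You have merely fleshed out the details the paper leaves implicit, in particular the passage from $\cP^n$ to $\cK^n$ for each homogeneous component and the identification of $\phi_n$ with a multiple of volume (for which invoking Theorem~\ref{th: classification dilation} is slight overkill but perfectly valid).
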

\begin{proof}
  One implication follows directly from Theorem \ref{homparts} together with continuity,
  Theorem \ref{th: McMullen} and Lemma \ref{le: k-homogeneous, k geq n-2}.
  The other one is obvious.
\end{proof}

  \section{Acknowledgements}
The work of the second author was supported by the ETH Zurich Postdoctoral Fellowship Program
and the Marie Curie Actions for People COFUND Program.
The authors would like to thank Monika Ludwig, Rolf Schneider and Franz Schuster
for fruitful discussions and comments.
The second author wants to especially thank Michael Eichmair and Horst Knörrer for being his mentors at ETH Zurich.

  {
    \setlength{\parskip}{0mm}
    \begin{bibdiv}
      \begin{biblist}
\bib{Abardia2012}{article}{
   author={Abardia, Judit},
   title={Difference bodies in complex vector spaces},
   journal={J. Funct. Anal.},
   volume={263},
   date={2012},
   number={11},
   pages={3588--3603},
}

\bib{AbardiaBernig2011}{article}{
   author={Abardia, Judit},
   author={Bernig, Andreas},
   title={Projection bodies in complex vector spaces},
   journal={Adv. Math.},
   volume={227},
   date={2011},
   number={2},
   pages={830--846},
}

\bib{Alesker1999}{article}{
   author={Alesker, S.},
   title={Continuous rotation invariant valuations on convex sets},
   journal={Ann. of Math. (2)},
   volume={149},
   date={1999},
   number={3},
   pages={977--1005},
}

\bib{Alesker2001}{article}{
   author={Alesker, S.},
   title={Description of translation invariant valuations on convex sets
   with solution of P. McMullen's conjecture},
   journal={Geom. Funct. Anal.},
   volume={11},
   date={2001},
   number={2},
   pages={244--272},
}

\bib{AleskerBernig2012}{article}{
   author={Alesker, Semyon},
   author={Bernig, Andreas},
   title={The product on smooth and generalized valuations},
   journal={Amer. J. Math.},
   volume={134},
   date={2012},
   number={2},
   pages={507--560},
}

\bib{AleskerBernigSchuster2011}{article}{
   author={Alesker, Semyon},
   author={Bernig, Andreas},
   author={Schuster, Franz E.},
   title={Harmonic analysis of translation invariant valuations},
   journal={Geom. Funct. Anal.},
   volume={21},
   date={2011},
   number={4},
   pages={751--773},
}

\bib{AleskerFaifman2014}{article}{
   author={Alesker, Semyon},
   author={Faifman, Dmitry},
   title={Convex valuations invariant under the Lorentz group},
   journal={J. Differential Geom.},
   volume={98},
   date={2014},
   number={2},
   pages={183--236},
}

\bib{Bernig2009}{article}{
   author={Bernig, Andreas},
   title={A Hadwiger-type theorem for the special unitary group},
   journal={Geom. Funct. Anal.},
   volume={19},
   date={2009},
   number={2},
   pages={356--372},
}

\bib{BernigFaifman}{article}{
   author={Bernig, Andreas},
   author={Faifman, Dmitry},
   title={Generalized translation invariant valuations and the polytope algebra},
   journal={arXiv:1406.4500 [math.DG]},
   date={preprint},
}

\bib{BernigFu2011}{article}{
   author={Bernig, Andreas},
   author={Fu, Joseph H. G.},
   title={Hermitian integral geometry},
   journal={Ann. of Math. (2)},
   volume={173},
   date={2011},
   number={2},
   pages={907--945},
}

\bib{GlurHadwiger1951}{article}{
   author={Glur, P.},
   author={Hadwiger, H.},
   title={Zerlegungsgleichheit ebener Polygone},
   language={German},
   journal={Elemente der Math.},
   volume={6},
   date={1951},
   pages={97--106},
}

\bib{Haberl2011}{article}{
   author={Haberl, Christoph},
   title={Blaschke valuations},
   journal={Amer. J. Math.},
   volume={133},
   date={2011},
   number={3},
   pages={717--751},
}

\bib{Haberl2012}{article}{
   author={Haberl, Christoph},
   title={Minkowski valuations intertwining with the special linear group},
   journal={J. Eur. Math. Soc. (JEMS)},
   volume={14},
   date={2012},
   number={5},
   pages={1565--1597},
}

\bib{HaberlLudwig2006}{article}{
   author={Haberl, Christoph},
   author={Ludwig, Monika},
   title={A characterization of $L_p$ intersection bodies},
   journal={Int. Math. Res. Not.},
   date={2006},
   pages={Art. ID 10548, 29},
}

\bib{HaberlParapatits2014_1}{article}{
   author={Haberl, Christoph},
   author={Parapatits, Lukas},
   title={The centro-affine Hadwiger theorem},
   journal={J. Amer. Math. Soc.},
   volume={27},
   date={2014},
   number={3},
   pages={685--705},
}

\bib{HaberlParapatits2014_2}{article}{
   author={Haberl, Christoph},
   author={Parapatits, Lukas},
   title={Valuations and surface area measures},
   journal={J. Reine Angew. Math.},
   volume={687},
   date={2014},
   pages={225--245},
}

\bib{Hadwiger1952}{article}{
   author={Hadwiger, H.},
   title={Translationsinvariante, additive und schwachstetige
   Polyederfunktionale},
   language={German},
   journal={Arch. Math. (Basel)},
   volume={3},
   date={1952},
   pages={387--394},
}

\bib{Hadwiger1957}{book}{
   author={Hadwiger, H.},
   title={Vorlesungen \"uber Inhalt, Oberfl\"ache und Isoperimetrie},
   language={German},
   publisher={Springer-Verlag, Berlin-G\"ottingen-Heidelberg},
   date={1957},
   pages={xiii+312},
}

\bib{Hadwiger1968}{article}{
   author={Hadwiger, H.},
   title={Translative Zerlegungsgleichheit der Polyeder des gew\"ohnlichen
   Raumes},
   language={German},
   journal={J. Reine Angew. Math.},
   volume={233},
   date={1968},
   pages={200--212},
}

\bib{HugSchneider2014}{article}{
   author={Hug, Daniel},
   author={Schneider, Rolf},
   title={Local tensor valuations},
   journal={Geom. Funct. Anal.},
   volume={24},
   date={2014},
   number={5},
   pages={1516--1564},
}

\bib{JessenThorup1978}{article}{
   author={Jessen, B{\o}rge},
   author={Thorup, Anders},
   title={The algebra of polytopes in affine spaces},
   journal={Math. Scand.},
   volume={43},
   date={1978},
   number={2},
   pages={211--240 (1979)},
}

\bib{Klain1995}{article}{
   author={Klain, Daniel A.},
   title={A short proof of Hadwiger's characterization theorem},
   journal={Mathematika},
   volume={42},
   date={1995},
   number={2},
   pages={329--339},
}

\bib{Klain2000}{article}{
   author={Klain, Daniel A.},
   title={Even valuations on convex bodies},
   journal={Trans. Amer. Math. Soc.},
   volume={352},
   date={2000},
   number={1},
   pages={71--93},
}

\bib{KlainRota1997}{book}{
   author={Klain, Daniel A.},
   author={Rota, Gian-Carlo},
   title={Introduction to geometric probability},
   series={Lezioni Lincee. [Lincei Lectures]},
   publisher={Cambridge University Press, Cambridge},
   date={1997},
   pages={xiv+178},
}

\bib{Ludwig2002_1}{article}{
   author={Ludwig, Monika},
   title={Projection bodies and valuations},
   journal={Adv. Math.},
   volume={172},
   date={2002},
   number={2},
   pages={158--168},
}

\bib{Ludwig2002_3}{article}{
   author={Ludwig, Monika},
   title={Valuations of polytopes containing the origin in their interiors},
   journal={Adv. Math.},
   volume={170},
   date={2002},
   number={2},
   pages={239--256},
}

\bib{Ludwig2003}{article}{
   author={Ludwig, Monika},
   title={Ellipsoids and matrix-valued valuations},
   journal={Duke Math. J.},
   volume={119},
   date={2003},
   number={1},
   pages={159--188},
}

\bib{Ludwig2005}{article}{
   author={Ludwig, Monika},
   title={Minkowski valuations},
   journal={Trans. Amer. Math. Soc.},
   volume={357},
   date={2005},
   number={10},
   pages={4191--4213},
}

\bib{Ludwig2006}{article}{
   author={Ludwig, Monika},
   title={Intersection bodies and valuations},
   journal={Amer. J. Math.},
   volume={128},
   date={2006},
   number={6},
   pages={1409--1428},
}

\bib{Ludwig2010}{article}{
   author={Ludwig, Monika},
   title={Minkowski areas and valuations},
   journal={J. Differential Geom.},
   volume={86},
   date={2010},
   number={1},
   pages={133--161},
}

\bib{LudwigReitzner2010}{article}{
   author={Ludwig, Monika},
   author={Reitzner, Matthias},
   title={A classification of ${\rm SL}(n)$ invariant valuations},
   journal={Ann. of Math. (2)},
   volume={172},
   date={2010},
   number={2},
   pages={1219--1267},
}

\bib{McMullen1980}{article}{
   author={McMullen, P.},
   title={Continuous translation-invariant valuations on the space of
   compact convex sets},
   journal={Arch. Math. (Basel)},
   volume={34},
   date={1980},
   number={4},
   pages={377--384},
}

\bib{McMullen1983}{article}{
   author={McMullen, Peter},
   title={Weakly continuous valuations on convex polytopes},
   journal={Arch. Math. (Basel)},
   volume={41},
   date={1983},
   number={6},
   pages={555--564},
}

\bib{McMullen1989}{article}{
   author={McMullen, Peter},
   title={The polytope algebra},
   journal={Adv. Math.},
   volume={78},
   date={1989},
   number={1},
   pages={76--130},
}

\bib{Parapatits2014_1}{article}{
   author={Parapatits, Lukas},
   title={${\rm SL}(n)$-contravariant $L_p$-Minkowski valuations},
   journal={Trans. Amer. Math. Soc.},
   volume={366},
   date={2014},
   number={3},
   pages={1195--1211},
}

\bib{Parapatits2014_2}{article}{
   author={Parapatits, Lukas},
   title={${\rm SL}(n)$-covariant $L_p$-Minkowski valuations},
   journal={J. Lond. Math. Soc. (2)},
   volume={89},
   date={2014},
   number={2},
   pages={397--414},
}

\bib{ParapatitsSchuster2012}{article}{
   author={Parapatits, Lukas},
   author={Schuster, Franz E.},
   title={The Steiner formula for Minkowski valuations},
   journal={Adv. Math.},
   volume={230},
   date={2012},
   number={3},
   pages={978--994},
}

\bib{ParapatitsWannerer2013}{article}{
   author={Parapatits, Lukas},
   author={Wannerer, Thomas},
   title={On the inverse Klain map},
   journal={Duke Math. J.},
   volume={162},
   date={2013},
   number={11},
   pages={1895--1922},
}

\bib{Sah1979}{book}{
   author={Sah, C. H.},
   title={Hilbert's third problem: scissors congruence},
   series={Research Notes in Mathematics},
   volume={33},
   publisher={Pitman (Advanced Publishing Program), Boston, Mass.-London},
   date={1979},
   pages={vi+188},
}

\bib{Schneider1996}{article}{
   author={Schneider, Rolf},
   title={Simple valuations on convex bodies},
   journal={Mathematika},
   volume={43},
   date={1996},
   number={1},
   pages={32--39},
}

\bib{Schneider2014}{book}{
   author={Schneider, Rolf},
   title={Convex bodies: the Brunn-Minkowski theory},
   series={Encyclopedia of Mathematics and its Applications},
   volume={151},
   edition={Second expanded edition},
   publisher={Cambridge University Press, Cambridge},
   date={2014},
   pages={xxii+736},
}

\bib{Schuster2010}{article}{
   author={Schuster, Franz E.},
   title={Crofton measures and Minkowski valuations},
   journal={Duke Math. J.},
   volume={154},
   date={2010},
   number={1},
   pages={1--30},
}

\bib{SchusterWannerer2012}{article}{
   author={Schuster, Franz E.},
   author={Wannerer, Thomas},
   title={${\rm GL}(n)$ contravariant Minkowski valuations},
   journal={Trans. Amer. Math. Soc.},
   volume={364},
   date={2012},
   number={2},
   pages={815--826},
}

\bib{Wannerer2011}{article}{
   author={Wannerer, Thomas},
   title={${\rm GL}(n)$ equivariant Minkowski valuations},
   journal={Indiana Univ. Math. J.},
   volume={60},
   date={2011},
   number={5},
   pages={1655--1672},
}

\bib{Wannerer2014}{article}{
   author={Wannerer, Thomas},
   title={The module of unitarily invariant area measures},
   journal={J. Differential Geom.},
   volume={96},
   date={2014},
   number={1},
   pages={141--182},
}

      \end{biblist}
    \end{bibdiv}
  }

  \bigskip \goodbreak
\begin{minipage}{0.45\textwidth}
	Katharina Kusejko \\
	ETH Zurich \\
	Department of Mathematics \\
	Rämistrasse 101\\
	8092 Zürich, Switzerland \\
	katharina.kusejko@math.ethz.ch
\end{minipage}
\begin{minipage}{0.35\textwidth}
	Lukas Parapatits \\
	ETH Zurich \\
	Department of Mathematics \\
	Rämistrasse 101 \\
	8092 Zürich, Switzerland \\
	lukas.parapatits@math.ethz.ch
\end{minipage}

\end{document}